\newtheorem{theorem}{Theorem}[section]
\newtheorem{lemma}[theorem]{Lemma}
\theoremstyle{definition}
\newtheorem{definition}[theorem]{Definition}
\newtheorem{example}[theorem]{Example}
\newtheorem{corollary}[theorem]{Corollary}
\newtheorem{proposition}[theorem]{Proposition}
\theoremstyle{remark}
\newtheorem{remark}[theorem]{Remark}
\numberwithin{equation}{section}
\newcommand{\ep}{\epsilon}
\newcommand{\Om}{\Omega}
\newcommand{\R}{\mathbb{R}}
\newcommand{\Z}{\mathbb{Z}}
\newcommand{\lan}{\langle}
\newcommand{\ran}{\rangle}
\newcommand{\DN}{\Delta^{\frac{1}{2}}}
\newcommand{\Length}{\operatorname{Length}}
\begin{document}

\title[Chord Shortening Flow]{Chord Shortening Flow and a Theorem of Lusternik and Schnirelmann}
\date{\today}

\author[Martin Li]{Martin Man-chun Li}
\address{Department of Mathematics, The Chinese University of Hong Kong, Shatin, N.T., Hong Kong}
\email{martinli@math.cuhk.edu.hk}

\begin{abstract}
We introduce a new geometric flow called the chord shortening flow which is the negative gradient flow for the length functional on the space of chords with end points lying on a fixed submanifold in Euclidean space. As an application, we give a simplified proof of a classical theorem of Lusternik and Schnirelmann (and a generalization by Riede and Hayashi) on the existence of multiple orthogonal geodesic chords. For a compact convex planar domain, we show that any convex chord which is not orthogonal to the boundary would shrink to a point in finite time under the flow.
\end{abstract}

\maketitle

\section{Introduction}
\label{S:intro}

The existence of closed geodesics in a Riemannian manifold is one of the most fundamental questions in geometry that has been studied extensively since the time of Poincar\'{e} \cite{Poincare04}. The critical point theories developed by Morse and Lusternik-Schnirelmann have played an essential role in this problem in the early 20th century (see \cite{Klingenberg} for a detailed exposition up to 1978). Although there does not exist closed geodesics in $\R^n$, it is natural to look for geodesics contained in a bounded domain $\Omega \subset \R^n$ which meets $\partial \Omega$ orthogonally at its end points. These are called \emph{orthogonal geodesic chords} (see Definition \ref{D:OGC} for a precise definition). In \cite{Lusternik-Schnirelmann}, Lusternik and Schnirelmann proved the following celebrated result:

\begin{theorem}[Lusternik-Schnirelmann]
\label{T:LS}
Any bounded domain in $\mathbb{R}^n$ with smooth convex boundary contains at least $n$ distinct orthogonal geodesic chords.
\end{theorem}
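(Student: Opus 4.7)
The plan is to apply Lusternik--Schnirelmann category theory to the chord length functional $L(p,q)=|p-q|$ on the configuration space $\widetilde{C}:=(\partial\Omega\times\partial\Omega\setminus\Delta)/\Z_2$ of unordered pairs of distinct boundary points, using the chord shortening flow as the gradient-type deformation. The opening step is critical point analysis: differentiating $L$ tangentially to $\partial\Omega$ in each slot shows that $(p,q)$ is critical if and only if the segment $\overline{pq}$ meets $\partial\Omega$ orthogonally at both endpoints, so critical points of $L$ on $\widetilde{C}$ correspond bijectively to orthogonal geodesic chords of $\Omega$.

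Next I would compute the Lusternik--Schnirelmann category of $\widetilde{C}$. Smooth convexity forces $\partial\Omega$ to be diffeomorphic to $S^{n-1}$ (e.g.\ by radial projection from an interior point), hence $\widetilde{C}\simeq(S^{n-1}\times S^{n-1}\setminus\Delta)/\Z_2$. The map $\{p,q\}\mapsto \operatorname{span}(q-p)\in\mathbb{RP}^{n-1}$ exhibits $\widetilde{C}$ as a bundle over $\mathbb{RP}^{n-1}$ with contractible fibre (the open half-space parametrising positions and lengths of parallel chords), so $\widetilde{C}$ deformation retracts onto $\mathbb{RP}^{n-1}$ and $\operatorname{cat}(\widetilde{C})=\operatorname{cat}(\mathbb{RP}^{n-1})=n$.

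The third step is the Lusternik--Schnirelmann min-max, with the chord shortening flow providing the needed deformation. For $1\le k\le n$ define
\[
c_k:=\inf_{A\in\Acal_k}\sup_{(p,q)\in A}L(p,q),
\]
where $\Acal_k$ is the family of compact subsets of $\widetilde{C}$ of category at least $k$. Provided $c_k>0$, a standard minimax argument driven by the negative gradient flow of $L$ produces a critical point at level $c_k$, and the multiplicity principle (if $c_k=c_{k+1}$ the critical set at that level has category $\ge 2$, hence contains at least two geometrically distinct points) then yields $n$ geometrically distinct orthogonal geodesic chords. The requisite deformation lemma reduces to a Palais--Smale-type statement: any sequence $(p_j,q_j)\in\widetilde{C}$ with $L(p_j,q_j)\to c>0$ and $|\nabla L|(p_j,q_j)\to 0$ stays away from $\Delta$ and subconverges, by compactness of $\partial\Omega$, to an orthogonal geodesic chord.

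The principal obstacle is ensuring that every $c_k$ is strictly positive, i.e.\ that the min-max families cannot be swept into the diagonal $\Delta$ where $L$ vanishes. Here the convexity hypothesis and the detailed properties of the chord shortening flow developed earlier in the paper are crucial: one must show that a class $A\in\Acal_k$ of positive category must contain a chord of definite length (reflecting that a family close to $\Delta$ is null-homotopic in $\widetilde{C}$, while a category-$k$ class is not), and that the flow preserves a uniform lower length bound along admissible deformations. Analysing the boundary behaviour of the flow at $\Delta$, so as to rule out spurious finite-time collapse of non-orthogonal chords to a point on $\partial\Omega$, is the main technical input; with that in hand the Lusternik--Schnirelmann machinery immediately delivers Theorem \ref{T:LS}.
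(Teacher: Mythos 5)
Your route differs from the paper's in a way that introduces a genuine gap. The paper does not argue on the space $\widetilde{C}=(\partial\Omega\times\partial\Omega\setminus\Delta)/\Z_2$ with the diagonal deleted. Instead it derives Theorem~\ref{T:LS} as a special case of the Riede--Hayashi result (Theorem~\ref{T:double-normals}), whose proof uses \emph{relative} $\Z_2$-equivariant (co)homology: one keeps the diagonal in the picture, takes a nonzero class $\beta\in H^{\Z_2}_{*}(\Sigma\times\Sigma,\Delta)$ and a degree-one class $\alpha\in H^1_{\Z_2}(\Sigma\times\Sigma)$ with $\alpha^k\cap\beta\neq 0$ (Lemmas~\ref{L:Riede} and~\ref{L:Hayashi}), and lets the chord shortening flow play the role of the Birkhoff-type deformation map $\Psi$. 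Working relative to $\Delta$ is exactly what makes the min-max levels positive: a representative of a nontrivial relative class cannot be pushed entirely into $\Delta$ without killing the class. Convexity of $\partial\Omega$ is then used only at the very end to ensure the chords so produced lie inside $\overline{\Omega}$.

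The gap in your proposal is the claim that ``a family close to $\Delta$ is null-homotopic in $\widetilde{C}$,'' which is used to argue $c_k>0$ for all $k\leq n$. That claim is false, and the truth runs in the opposite direction. For $\partial\Omega\cong S^{n-1}$ the sublevel set $\{L<\epsilon\}$ deformation retracts onto the projectivised tangent bundle $\mathbb{P}(TS^{n-1})$ (short chords are approximated by tangent directions at their midpoints, modulo $v\mapsto -v$), and the composite $\mathbb{P}(TS^{n-1})\hookrightarrow\widetilde{C}\simeq\mathbb{RP}^{n-1}$ is homotopic to the bundle projection sending a tangent line to the parallel line through the origin; this map has fibre $S^{n-2}$ and is far from null-homotopic. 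Concretely, the generator $\alpha\in H^1(\mathbb{RP}^{n-1};\Z_2)$ pulls back to the Euler-type class $y$ of the tautological bundle on $\mathbb{P}(TS^{n-1})$, and since $TS^{n-1}$ is stably trivial one gets $y^{n-2}\neq 0$; hence $\operatorname{cat}_{\widetilde{C}}(\{L<\epsilon\})\geq n-1$. Consequently your families $\Acal_k$ for $k\leq n-1$ contain compact subsets of arbitrarily short chords, forcing $c_1=\cdots=c_{n-1}=0$: your min-max scheme detects at most one positive level, not $n$. This is precisely the difficulty that the Riede--Hayashi relative-homology framework (and the paper's proof) is designed to avoid, and it is why the paper proves Theorem~\ref{T:LS} via Theorem~\ref{T:double-normals} rather than by an absolute Lusternik--Schnirelmann category argument on the punctured configuration space.

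Two minor remarks in addition. First, your claimed bijection between critical points of $L$ on $\widetilde{C}$ and orthogonal geodesic chords is fine, and your computation $\operatorname{cat}(\widetilde{C})=\operatorname{cat}(\mathbb{RP}^{n-1})=n$ is correct; it is only the control of the low levels near $\Delta$ that fails. Second, even if the positivity issue were fixed, one would still need to verify that the chord shortening flow (which does shrink convex chords to points in finite time, cf.\ Theorem~\ref{T:contraction}) can be used as the deformation in your min-max without sweeping admissible families across the diagonal; in the relative framework this is handled automatically because such a deformation only makes the relative cycle smaller, not nontrivial.
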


Kuiper \cite{Kuiper64} showed that the same conclusion holds if the boundary is only $C^{1,1}$. For our convenience, we will assume that all the submanifolds and maps are $C^\infty$. Recall that the boundary of a domain $\Omega \subset \R^n$ is said to be (locally) \emph{convex} if the second fundamental form $A$ of $\partial \Omega$ with respect to the unit normal $\nu$ (pointing into $\Omega$) is positive semi-definite, i.e. for all $p \in \partial \Omega$, $u \in T_p \partial \Omega$, we have
\begin{equation}
\label{E:convex}
A(u,u):=\langle D_u u ,\nu \rangle  \geq 0,
\end{equation}
where $D$ is the standard flat connection in $\R^n$. Notice that Theorem \ref{T:LS} gives an optimal lower bound as seen in the example of the convex region bounded by the ellipsoid given by
\[ \Omega:=\left\{(x_1,\cdots,x_n) \in \R^n \; : \; \sum_{i=1}^n \frac{x_i^2}{a_i^2} \leq 1 \right\} \]
where $a_1,\cdots,a_n$ are distinct positive real numbers. 

In \cite{Bos63}, Bos generalized Lusternik-Schnirelmann's result to the setting of Riemannian (or even Finsler) manifolds.

\begin{theorem}[Bos]
\label{T:Bos}
A compact Riemannian manifold $(M^n,g)$ which is homeomorphic to the closed unit ball in $\R^n$ with locally convex boundary contains at least $n$ orthogonal geodesic chords.
\end{theorem}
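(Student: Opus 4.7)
The plan is to apply Lusternik-Schnirelmann min-max theory to the length functional on the space of chords in $M$, using a Riemannian analogue of the chord shortening flow as the negative gradient deformation. First, I would work with the Hilbert manifold $\Ccal$ of $H^1$ curves $\gamma\colon[0,1]\to M$ satisfying $\gamma(0),\gamma(1)\in\partial M$, equipped with the length functional $L(\gamma)$ (or, to avoid reparametrization degeneracy, the energy). A first-variation calculation with free boundary shows that the nonconstant critical points of $L$ on $\Ccal$ are precisely the orthogonal geodesic chords in $M$: interior variations yield the geodesic equation, while variations of the endpoints along $\partial M$ force $\gamma'(0)$ and $\gamma'(1)$ to be normal to $\partial M$.

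Next, I would construct the chord shortening flow on $\Ccal$ as the negative $L^2$-gradient flow of $L$, adapting the Euclidean construction of earlier sections of the paper to the Riemannian setting via geodesic normal coordinates near $\partial M$ to enforce the free boundary condition. Compactness of $M$, together with the local convexity of $\partial M$, plays the same role here as in the Euclidean case: convexity guarantees that endpoints can be kept on $\partial M$ along the flow without developing tangential singularities, and that positive-length chords cannot be pushed onto the stratum $\Ccal_0\subset \Ccal$ of constant chords on $\partial M$ in finite time. Together these yield a deformation satisfying a Palais-Smale-type condition suitable for critical point theory.

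With the flow in place, I would apply Lusternik-Schnirelmann theory to the pair $(\Ccal,\Ccal_0)$. Since $M$ is homeomorphic to the closed $n$-ball, $\partial M\cong S^{n-1}$, and the $\mathbb{Z}/2$ involution swapping the endpoints of a chord makes the appropriate quotient of $\Ccal/\Ccal_0$ carry the cohomology of $\mathbb{RP}^{n-1}$. Its $\mathbb{Z}/2$-cup-length is $n-1$, so there are $n$ nested nontrivial topological classes $\Fcal_1\supset\Fcal_2\supset\cdots\supset \Fcal_n$, yielding $n$ min-max values
\[ c_k \;=\; \inf_{A\in\Fcal_k}\sup_{\gamma\in A} L(\gamma), \qquad k=1,\dots,n. \]
Running the chord shortening flow from near-maximizing sets then produces $n$ critical points of $L$ when the $c_k$ are distinct; coincidences $c_k=c_{k+1}$ are handled in the standard way by producing a positive-dimensional set of critical chords at that common level, which still contributes infinitely many distinct orthogonal geodesic chords.

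The main obstacle I expect is guaranteeing that each $c_k$ is strictly positive, so that the critical points produced are genuine nonconstant orthogonal geodesic chords rather than degenerate points of $\Ccal_0$. This is precisely where the local convexity hypothesis on $\partial M$ is indispensable: as in the author's Euclidean argument, convexity prevents the flow from dragging positive-length chords into $\Ccal_0$, allowing the classes $\Fcal_k$ to be chosen at a uniform positive distance from $\Ccal_0$ and forcing $c_k>0$. A secondary technical point is checking that the chord shortening flow extends smoothly to the curved ambient geometry, with the geodesic exponential map replacing the flat projection onto $\partial M$; this parallels the Euclidean construction and should not require substantively new ideas.
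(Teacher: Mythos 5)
The paper itself does not prove Theorem~\ref{T:Bos}; it is cited from Bos's original work, and the author explicitly remarks that the chord shortening flow ``provide[s] the most natural curve shortening process required in the proof of Theorem~\ref{T:LS} and~\ref{T:double-normals} (\emph{but not} Theorem~\ref{T:Bos} in its full generality).'' Your proposal is therefore attempting exactly what the paper warns against, and the obstruction is genuine.

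The central gap is your assumption that a ``Riemannian analogue of the chord shortening flow'' on the space of endpoint pairs can be set up. The Euclidean flow is an ODE on the finite-dimensional space $\Sigma\times\Sigma$ precisely because any two points of $\R^n$ are joined by a \emph{unique} line segment whose conormal at the endpoints depends smoothly on $(p,q)$. In a general Riemannian ball with merely locally convex boundary, two boundary points may be joined by several minimizing geodesics, by geodesics with conjugate points, or by geodesics that cannot be continued smoothly in the endpoints; the chord ``$C_{p,q}$'' is then not well defined, and the flow (\ref{E:CSF'}) has no analogue. Local convexity of $\partial M$ does not give geodesic convexity of $M$, so you cannot assume a unique geodesic chord for each endpoint pair. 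This is exactly why Bos's actual argument (as the paper notes) uses a Birkhoff-type \emph{discrete} shortening process with many subdivision points joined by short, hence unique, minimizing segments---a construction whose configuration space is far larger than $\Sigma\times\Sigma$. Your proposal never supplies a substitute for this uniqueness.

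A secondary problem is that you conflate two inequivalent settings. Working on the Hilbert manifold of $H^1$ curves with free boundary, the negative gradient flow of the energy is a parabolic PDE (a heat flow), not the ODE-type chord shortening flow; the ``chord'' structure is lost entirely, and the Palais--Smale condition with free boundary needs its own verification which you do not supply. If instead you insist on the finite-dimensional chord picture, you run into the non-uniqueness issue above. Either route might be made to work, but neither is ``adapting the Euclidean construction of earlier sections,'' and each requires a substantive new ingredient (a well-defined selection of geodesic chords with smooth dependence, or a free-boundary Palais--Smale estimate) that your outline does not provide. The claim that convexity alone lets you keep the min-max classes ``at a uniform positive distance from $\Ccal_0$'' is likewise asserted rather than argued; in the paper's own Euclidean treatment this positivity comes from the Hayashi cohomology class being nontrivial \emph{relative} to the diagonal, together with the length lower bound in Lemma~\ref{L:ell-limit}, not from convexity of $\partial\Omega$ per se.
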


Moreover, he showed that the convexity assumption cannot be dropped even in $\R^2$ (see Figure \ref{F:Bos}). 

\begin{figure}[h]
\centering
\includegraphics[height=4cm]{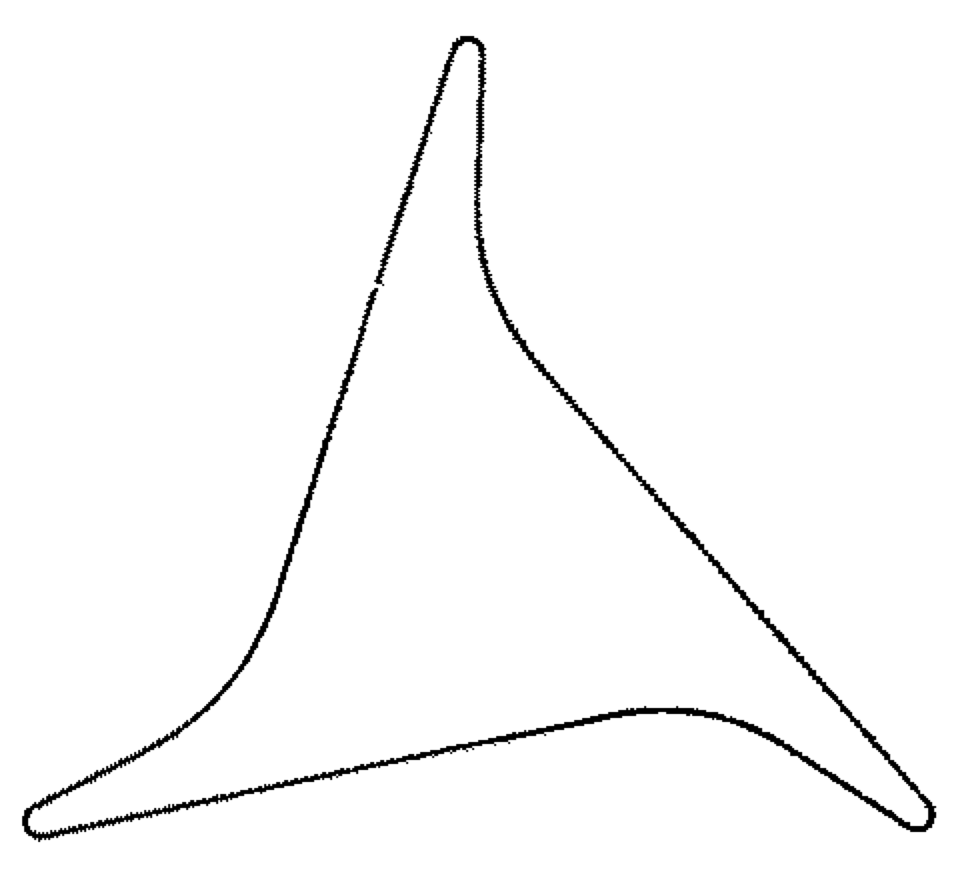}
\caption{Bos' example of a non-convex domain $\Omega$ in $\R^2$ which does not have any orthogonal geodesic chord contained in $\Omega$.}
\label{F:Bos}
\end{figure}

Nonetheless, one can still ask for the existence of orthogonal geodesic chords, by allowing them to go \emph{outside} the domain. This problem was first introduced by Riede \cite{Riede68}, where he studied the variational calculus of the space $\Gamma$ consisting of piecewise smooth curves in a complete Riemannian manifold $(M^n,g)$ with end points lying on a compact submanifold $\Sigma^k \subset M$. In particular, he estimated the minimum number of critical points, which are orthogonal geodesic chords, in terms of certain topological invariant called the ``cup-length'' of the equivariant cohomology of $\Gamma$ with respect to the $\Z_2$-action reversing the orientation of a curve. In \cite{Hayashi82}, Hayashi computed the cup-length when $\Sigma$ is a compact submanifold in $\R^n$ and hence proved the following result.

\begin{theorem}[Riede-Hayashi]
\label{T:double-normals}
Any $k$-dimensional compact submanifold $\Sigma$ in $\R^n$ admits at least $k+1$ orthogonal geodesic chords.
\end{theorem}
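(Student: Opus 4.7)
My plan is to run a Lusternik--Schnirelmann min-max on the space of chords with endpoints on $\Si$, using the chord shortening flow introduced in this paper as the deformation tool. Because geodesics in $\R^n$ are straight line segments, I would model a chord as an unordered pair of distinct points on $\Si$ and take as configuration space
\[ \La := (\Si \times \Si \setminus \De)/\Z_2, \]
where $\De$ is the diagonal and $\Z_2$ acts by swapping the two factors. The length functional $L([p,q]) := |p-q|$ is smooth and strictly positive on $\La$. Computing $dL$ from the Euclidean gradient $(p-q)/|p-q|$ projected onto $T_p\Si \oplus T_q\Si$ identifies the critical points of $L$ as exactly the orthogonal geodesic chords of $\Si$.

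Next I would verify the standard deformation lemma using the chord shortening flow: if $[a,b] \subset (0,\infty)$ contains no critical value of $L$, then the sublevel $\{L \leq b\}$ retracts under the flow onto $\{L \leq a\}$. Compactness of $\Si$, together with the lower bound $L \geq a > 0$ on the retract, yields a Palais--Smale type condition at each positive critical value and keeps trajectories bounded away from $\De$.

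I would then apply the cohomological Lusternik--Schnirelmann inequality: the number of critical values of $L$ is at least one plus the cup-length of the appropriate cohomology ring of $\La$, namely the $\Z_2$-equivariant cohomology of $\Si \times \Si \setminus \De$ under the swap involution. Following Hayashi, one shows that this cup-length is at least $k$ by combining the pullback of the generator of $H^1(\mathbb{RP}^{n-1};\Z_2)$ under the Gauss-type map $[p,q] \mapsto [p-q]$ with suitable classes from $H^*(\Si;\Z_2)$. This produces at least $k+1$ distinct positive critical values, hence $k+1$ geometrically distinct orthogonal geodesic chords.

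The main obstacle will be controlling the flow near the diagonal $\De$, where $L \to 0$ and chords can degenerate. I would need a small $L$-sublevel neighbourhood of $\De$ onto which the cup-length-realising classes restrict trivially; the natural construction retracts a tubular neighbourhood of $\De$ onto $\Si$ via the midpoint map, under which the pullback of $H^1(\mathbb{RP}^{n-1};\Z_2)$ vanishes since $[p-q]$ is no longer well defined in the collapsed limit. Combined with the flow's monotonicity in $L$, this confines the min-max critical values to a compact region of $\La$ bounded away from $\De$, so the deformation lemma and the L-S inequality apply rigorously and deliver the desired $k+1$ chords.
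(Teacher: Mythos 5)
Your proposal follows essentially the same strategy as the paper: both use the chord shortening flow as the deformation tool in a $\Z_2$-equivariant Lusternik--Schnirelmann min-max on the space of chords (identified with the $\Z_2$-quotient of $\Sigma\times\Sigma$), with the topological input being Riede's cap-product/cup-length criterion (Lemma~\ref{L:Riede}) and Hayashi's computation (Lemma~\ref{L:Hayashi}) giving $\alpha^k\cap\beta\neq 0$. The paper phrases the diagonal degeneration via relative equivariant homology of the pair $(\Sigma\times\Sigma,\Delta)$ rather than excising $\Delta$, and it cites Riede and Hayashi rather than reproving the cup-length bound, but these are bookkeeping variants of the same argument.
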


Note that Theorem \ref{T:double-normals} generalizes Theorem \ref{T:LS} by taking $\Sigma$ to be the boundary of a bounded convex domain. However, we emphasize that if $\Sigma=\partial \Omega$ is the boundary of a non-convex domain $\Omega \subset \R^n$, then the orthogonal geodesic chords obtained in Theorem \ref{T:double-normals} are not necessarily contained in $\Omega$ (recall Figure \ref{F:Bos}).

The original proof of Theorem \ref{T:LS}, \ref{T:Bos} and \ref{T:double-normals} all used a discrete curve shortening process similar to the one introduced by Birkhoff \cite{Birkhoff17} in the study of existence of closed geodesics in Riemannian manifolds. A description of the process can be found in \cite{Gluck-Ziller83} (see also a modified version in \cite{Zhou16}). The curve shortening process, denoted by $\Psi$, take a piecewise smooth curve $c:[0,1] \to M$ with end points lying on $\Sigma$ to a piecewise geodesic curve $\Psi(c):[0,1] \to M$ which meets $\Sigma$ orthogonally at its end points. The most important properties of $\Psi$ are summarized below:
\begin{itemize}
\item[(1)] $\Length(\Psi(c)) \leq \Length(c)$ and equality holds if and only if $c$ is an orthogonal geodesic chord, in which case $\Psi(c)=c$.
\item[(2)] $\Psi(c)$ depends continuously on $c$, with respect to the $C^0$ topology.
\item[(3)] $c$ and $\Psi(c)$ are homotopic in $M$ relative to $\Sigma$, i.e. there exists a continuous family $c_t:[0,1] \to M$, $t \in [0,1]$, with end points on $\Sigma$ such that $c_0=c$ and $c_1=\Psi(c)$. Moreover, the family $c_t$ depends continuously on $c$.
\end{itemize}
The curve shortening process $\Psi$ involves subdividing the curves and connecting points on the curve by minimizing geodesic segments (additional care has to be taken at the end points). The construction depends on some fixed parameter (which depends on the geometry of $M$, $\Sigma$ and $\Length(c)$). However, it can be shown that for curves with uniformly bounded length, the parameters can be chosen uniformly to make (1) - (3) above hold. In fact (1) and (3) follows easily from the constructions, but (2) requires some convexity estimates (see \cite[Lemma 3.2]{Zhou16}). Using (1) - (3), it is not difficult to see that the sequence $\{\Psi^i(c)\}_{i=1}^\infty$ either converges to a point on $\Sigma$ or has a subsequence converging to an orthogonal geodesic chord. Theorem \ref{T:LS}, \ref{T:Bos} and \ref{T:double-normals} then follows from the abstract Lusternik-Schnirelmann theory applied to families of curves with end points on $\Sigma$ which represent a non-trivial homology class relative to point curves on $\Sigma$. Interested readers can refer to \cite{Gluck-Ziller83} or \cite{Giannoni-Majer97} for more details (for Theorem \ref{T:LS} there is a more elementary proof - see \cite{Kuiper64} for example).

In this paper, we introduce a new curve shortening process called the \emph{chord shortening flow} (see Definition \ref{D:CSF}), which evolves a geodesic chord according to the ``contact angle'' that the chord makes with $\Sigma$ at its end points. It is the negative gradient flow for the length functional on the space of chords. We study the fundamental properties including the short-time existence and uniqueness and long-time convergence of the flow when the ambient space is $\R^n$. Note that the flow still makes sense in certain Riemannian manifolds but for simplicity we postpone the details to another forthcoming paper. The chord shortening flow, as a negative gradient flow, clearly satisfies all the properties (1) - (3) above; hence provide the most natural curve shortening process required in the proof of Theorem \ref{T:LS} and \ref{T:double-normals} (but not Theorem \ref{T:Bos} in its full generality). 

\begin{remark}
We would like to mention that Lusternik and Schnirelmann used the same ideas to prove the \emph{Theorem of Three Geodesics} which asserts that any Riemannian sphere $(S^2,g)$ contains at least three \emph{geometrically distinct} closed embedded geodesics. Unfortunately, the original proof by Lusternik-Schnirelmann \cite{Lusternik-Schnirelmann} contains a serious gap and various attempts have been made to fix it (see \cite{Taimanov92}). The fundamental issue there is \emph{multiplicity}, that one of the geodesics obtained may just be a multiple cover of another geodesic. It is extremely technical (and many false proofs were given) to rule out this situation by modifying the method of Lusternik-Schnirelmann. In \cite{Grayson89}, Grayson gave a rigorous proof of the Theorem of Three Geodesics by a careful analysis of the curve shortening flow on Riemannian surfaces. He proved that under the curve shortening flow, any embedded curve remains embedded and would either converge to a point in finite time or an embedded closed geodesic as time goes to infinity. As a curve which is initially embedded stays embedded throughout the flow, this prevents the multiplicity problem encountered by Lusternik-Schnirelmann's approach using a discrete curve shortening process of Birkhoff \cite{Birkhoff17}. On the other hand, the situation in Theorem \ref{T:LS} and \ref{T:double-normals} are simpler as multiplicity cannot occur (see \cite[Remark 3.2]{Giannoni-Majer97}).
\end{remark}

We show that the convergence behavior for the chord shortening flow is similar to that for the curve shortening flow on a closed Riemannian surface \cite{Grayson89}. In particular, we prove that under the chord shortening flow, any chord would either converge to a point in finite time or to an orthogonal geodesic chord as time goes to infinity. Unlike \cite{Grayson89}, this dichotomy holds in any dimension and codimension, in contrast with the curve shortening flow where an embedded curve may develop self-intersections or singularities after some time when codimension is greater than one \cite{Altschuler91}. In the special case that $\Sigma=\partial \Omega$ where $\Omega \subset \R^2$ is a compact convex planar domain, we give a sufficient condition for an initial chord to converge to a point in finite time. In fact, any ``\emph{convex}'' chord in $\Omega$ which is not an orthogonal geodesic chord would converge to a point on $\partial \Omega$ in finite time. This can be compared to the famous result of Huisken \cite{Huisken84} which asserts that any compact embedded convex hypersurface in $\R^n$ converges to a point in finite time under the mean curvature flow.

The chord shortening flow is also of independenet interest from the analytic point of view. Since any chord in $\R^n$ is determined uniquely by its end points, we can regard the chord shortening flow as an evolution equation for the two end points lying on $\Sigma$. As a result, the flow is a \emph{non-local} evolution of a pair of points on $\Sigma$ as it depends on the chord joining them. In fact, the chord shortening flow can be regarded as the heat equation for the half-Laplacian (or the \emph{Dirichlet-to-Neumann map}).

The organization of this paper is as follows. In Section 2, we introduce the chord shortening flow, give a few examples, and prove the short time existence and uniqueness of the flow. In Section 3, we derive the evolution equations for some geometric quantities under the chord shortening flow. In Section 4, we prove the long time existence to the flow provided that it does not shrink the chord to a point in finite time. In Section 5, we prove that an initial convex chord inside a compact convex domain in $\R^2$ would shrink to a point in finite time under the flow, provided that the initial chord is not an orthogonal geodesic chord. 

\vspace{.3cm}

\noindent \textbf{Acknowledgement.} The author would like to express his gratitude to Prof. Richard Schoen for his interest in this work. He also want to thank Mario Micallef and Luen-Fai Tam for helpful comments and discussions. These work are partially supported by a research grant from the Research Grants Council of the Hong Kong Special Administrative Region, China [Project No.: CUHK 14323516] and CUHK Direct Grant [Project Code: 3132705].

\vspace{.3cm}

\noindent \textit{Notations.} Throughout this paper, we will denote $I:=[0,1]$ with $\partial I=\{0,1\}$. The Euclidean space $\R^n$ is always equipped with the standard inner product $\langle \cdot, \cdot \rangle$ and norm $|\cdot |$. For any subset $S \subset \R^n$, we use $d(\cdot, S)$ to denote the distance function from $S$. 
\section{Chord Shortening Flow}
\label{S:CSF}

In this section, we introduce a new geometric flow called \emph{chord shortening flow}. This flow has some similarities with the classical curve shortening flow. 
The main result in this section is the short-time existence and uniqueness theorem for the chord shortening flow (Theorem \ref{T:short-time-existence}). We also study some basic examples of such a flow. 

Let $\Sigma$ be a $k$-dimensional smooth submanifold \footnote{In fact all the following discussions make sense for \emph{immersed} submanifolds. However, for simplicity, we will assume that all submanifolds are \emph{embedded}.} in $\mathbb{R}^n$. Note that $\Sigma$ can be disconnected in general. For any two points $p,q \in \Sigma$, we can consider the extrinsic chord distance between them in $\mathbb{R}^n$. 


\begin{definition}
\label{D:chord-distance}
The \emph{chord distance function} $d: \Sigma \times \Sigma \to \mathbb{R}_{\geq 0}$ is defined to be 
\[ d(p,q):=\textrm{dist}_{\mathbb{R}^n} (p,q)=|p-q|. \]
\end{definition}

Since any two distinct points in $\mathbb{R}^n$ are connected by a unique line segment realizing their distance, the chord distance function $d$ is smooth away from the diagonal $\{(p,p) \in \Sigma \times \Sigma \, : \, p \in \Sigma\}$. 

\begin{definition}
\label{D:Cpq}
For any two distinct points $p,q$ on $\Sigma$, we will use $C_{p,q}$ to denote the unique oriented chord from $p$ to $q$. The outward unit conormal, denoted by $\eta$, is the unit vector at $\partial C_{p,q}$ tangent to $C_{p,q}$ pointing out of $C_{p,q}$. Note that $\eta(p)=-\eta(q)$. (see Figure \ref{F:Cpq})
\end{definition}

\begin{figure}[h]
\centering
\includegraphics[height=5cm]{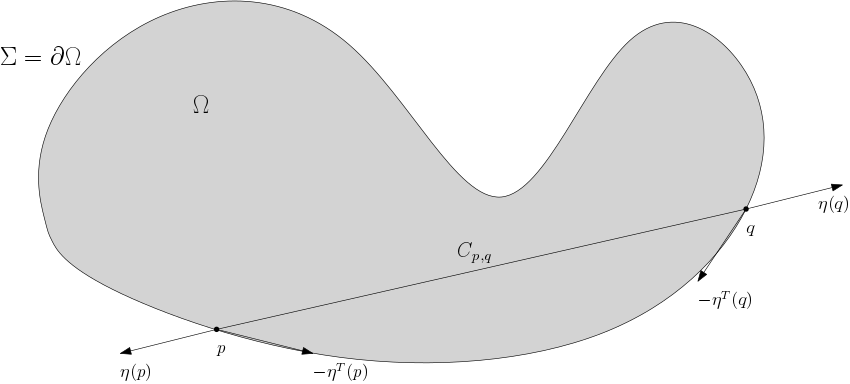}
\caption{A chord $C_{p,q}$ joining $p$ and $q$, the outward unit conormals $\eta$ and their (negative) tangential components along $\Sigma=\partial \Om$}
\label{F:Cpq}
\end{figure}


Let $C(t)=C_{p_t,q_t}$ be a smooth family of chords with distinct end points $p_t,q_t \in \Sigma$. If $\ell(t)=d(p_t,q_t)$ is the length of the chord $C(t)$, the first variation formula for arc length (see for example \cite[(1.5)]{Cheeger-Ebin}) implies that
\begin{equation}
\label{E:1st-var-a} 
\frac{d \ell}{dt} =\langle \frac{dp_t}{dt}, \eta(p_t) \rangle + \langle \frac{dq_t}{dt}, \eta(q_t) \rangle. 
\end{equation}
Note that the interior integral term vanishes as $C(t)$ is a geodesic for every $t$. Since $p_t$ and $q_t$ lies on $\Sigma$ for all $t$, both $dp_t/dt$ and $dq_t/dt$ are tangential to $\Sigma$. Therefore, we can express (\ref{E:1st-var-a}) as
\begin{equation}
\label{E:1st-var-b}
\frac{d \ell}{dt} = \langle \frac{dp_t}{dt}, \eta^T(p_t) \rangle + \langle \frac{dq_t}{dt}, \eta^T(q_t) \rangle
\end{equation}
where $(\cdot)^T$ denotes the tangential component of a vector relative to $\Sigma$. More precisely, if $\pi_x:\R^n \to T_x\Sigma$ is the orthogonal projection onto the tangent space $T_x \Sigma \subset \R^n$, then $v^T=\pi_x(v)$ for any vector $v \in T_x \R^n \cong \R^n$. 

It is natural to consider the (negative) gradient flow to the chord length functional, which leads to the following definition.


\begin{definition}[Chord Shortening Flow]
\label{D:CSF}
A smooth family of curves 
\[ C(u,t):I \times [0,T) \to \mathbb{R}^n\]
is a solution to the \emph{chord shortening flow} (relative to $\Sigma$) if for all $t \in [0,T)$, we have
\begin{itemize}
\item[(a)] $p_t:=C(0,t)$ and $q_t:=C(1,t)$ lies on $\Sigma$,
\item[(b)] $C(t):=C(\cdot,t):I \to \mathbb{R}^n$ is a constant speed parametrization of $C_{p_t,q_t}$,
\item[(c)] 
\[ \frac{\partial C}{\partial t} (0,t) = - \eta^T(C(0,t)) \qquad \text{and} \qquad \frac{\partial C}{\partial t} (1,t) = - \eta^T(C(1,t)).\]
\end{itemize}
\end{definition}

Let us begin with some basic examples of the chord shortening flow as defined in Definition \ref{D:CSF}. 

\begin{example}
\label{ex:flat}
Let $\Sigma$ be an affine $k$-dimensional subspace in $\R^n$. The chord shortening flow with respect to $\Sigma$ will contract any initial chord $C(0)=C_{p,q}$ to a point in finite time. The end points would move towards each other with unit speed along the chord $C(0)$ until they meet at the mid-point of $C(0)$ at the ``blow-up'' time $T=d(p,q)/2$.
\end{example}

\begin{example}
\label{ex:annulus}
Let $\Sigma$ be a union of two disjoint circles in $\R^2$. We will see (from Theorem \ref{T:convergence}) that any chord joining two distinct connected components of $\Sigma$ would evolve under the chord shortening flow to a limit chord $C_\infty$ orthogonal to $\Sigma$ as $t \to \infty$. The same phenomenon holds for any $\Sigma \subset \R^n$ which is disconnected.
\end{example}

\begin{figure}[h]
\centering
\includegraphics[height=5cm]{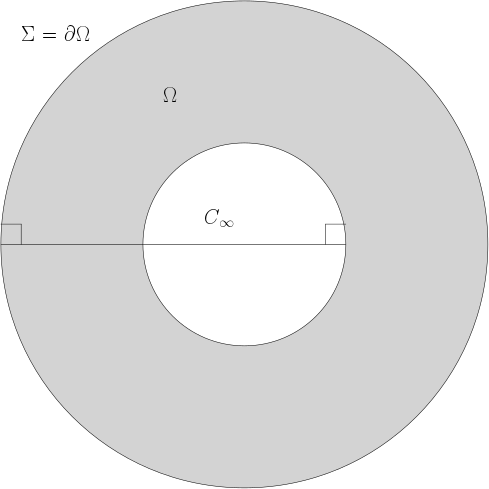}
\caption{A limit chord $C_\infty$ which meets $\partial \Omega$ orthogonally but not lying inside $\Omega$.}
\label{F:annulus}
\end{figure}

\begin{example}
Let $\Sigma$ be the ellipse $\{(x,y) \in \R^2 \, : \, x^2+4y^2=1\}$ in $\R^2$. By symmetry it is not difficult to see that for any initial chord passing through the origin (with the exception of the major axis), it would evolve under the chord shortening flow to the minor axis of the ellipse, which is a chord orthogonal to $\Sigma$ and contained inside the region enclosed by the ellipse. See Figure \ref{F:ellipse}. This example shows that the number of distinct orthogonal chords guaranteed by the Lyusternik-Schnirelmann Theorem is optimal. If we start with an initial chord that lies completely on one side of the major or minor axis, then the chord will shrink to a point in finite time (by Theorem \ref{T:contraction}).
\end{example}

\begin{figure}[h]
\centering
\includegraphics[height=5cm]{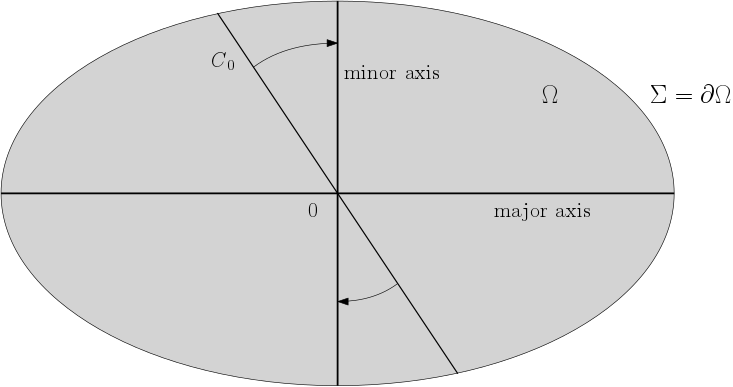}
\caption{Any initial chord $C_0$ through the origin (other than the major axis) would converge under the chord shortening flow to the minor axis.}
\label{F:ellipse}
\end{figure}

We end this section with a fundamental result on the short time existence and uniqueness for the chord shortening flow. 

\begin{proposition}[Short-time existence and uniqueness]
\label{T:short-time-existence}
For any initial chord $C_0:I \to \mathbb{R}^n$ with $C_0(\partial I) \subset \Sigma$, there exists an $\epsilon>0$ and a smooth solution $C(u,t):I \times [0,\epsilon) \to \mathbb{R}^n$ to the chord shortening flow relative to $\Sigma$ as in Definition \ref{D:CSF} with initial condition $C(\cdot,0)=C_0$. Moreover, the solution is unique.
\end{proposition}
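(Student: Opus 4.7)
The plan is to reduce the chord shortening flow to a smooth ODE on the open subset $(\Sigma\times\Sigma)\setminus\Delta$ of the configuration space, where $\Delta:=\{(p,p):p\in\Sigma\}$ is the diagonal. The key observation is that by (a) and (b) of Definition \ref{D:CSF}, the chord $C(\cdot,t)$ is uniquely determined by its endpoints $(p_t,q_t)$ through the constant-speed affine parametrization
\[ C(u,t)=(1-u)\,p_t+u\,q_t,\qquad u\in I. \]
Consequently the apparent PDE in Definition \ref{D:CSF} collapses to the pair of boundary ODEs in (c), which under this parametrization reads
\[ \frac{dp_t}{dt}=-\eta^T(p_t),\qquad \frac{dq_t}{dt}=-\eta^T(q_t), \]
with $\eta(p_t)=(p_t-q_t)/|p_t-q_t|$ and $\eta(q_t)=-\eta(p_t)$.

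Next I would verify that the vector field
\[ F(p,q):=\Bigl(-\pi_p\bigl(\tfrac{p-q}{|p-q|}\bigr),\ -\pi_q\bigl(\tfrac{q-p}{|q-p|}\bigr)\Bigr) \]
is a smooth tangent vector field on the manifold $(\Sigma\times\Sigma)\setminus\Delta$. Smoothness follows because $|p-q|$ is bounded away from zero on a neighborhood of any $(p_0,q_0)\notin\Delta$, and because both the Euclidean direction $(p-q)/|p-q|$ and the orthogonal projections $\pi_x:\R^n\to T_x\Sigma$ depend smoothly on their base point and argument. Tangency, namely $F(p,q)\in T_p\Sigma\times T_q\Sigma$, is built in since $\eta^T(x)=\pi_x(\eta(x))\in T_x\Sigma$ by construction.

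Then I would apply the standard Picard--Lindel\"of theorem on manifolds (or equivalently, pass to local coordinate charts around $p_0$ and $q_0$ and invoke the classical Euclidean statement). This yields an $\epsilon>0$ and a unique smooth integral curve $t\mapsto(p_t,q_t)$ of $F$ in $(\Sigma\times\Sigma)\setminus\Delta$ on $[0,\epsilon)$ with initial condition $(p_0,q_0)=(C_0(0),C_0(1))$; by continuity, after possibly shrinking $\epsilon$, the endpoints remain distinct so the integral curve stays in the domain of $F$. Setting $C(u,t):=(1-u)p_t+u\,q_t$ and checking (a)--(c) directly produces a smooth solution of the chord shortening flow with $C(\cdot,0)=C_0$. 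Uniqueness is automatic: any other solution must by (a) and (b) take this affine form, hence its endpoints satisfy the same ODE system, so uniqueness for the ODE forces uniqueness for the flow.

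The real content is the reduction itself: the observation that a chord is uniquely encoded by its endpoints turns what looks like a boundary-value problem for $C(u,t)$ into a finite-dimensional ODE on $\Sigma\times\Sigma$, after which the argument is routine. The only mild subtlety is that the initial data must be a genuine chord, i.e.\ $C_0(0)\neq C_0(1)$, since $\eta^T$ is defined only off the diagonal; the short-time persistence of this condition is immediate from continuity of the solution.
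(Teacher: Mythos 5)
Your proposal is correct and follows essentially the same route as the paper's proof: both reduce the flow to the first-order ODE system for the endpoints $(p_t,q_t)$ obtained from Definition~\ref{D:CSF}(c), observe that the right-hand side is smooth (indeed Lipschitz) away from the diagonal, and invoke the standard ODE existence--uniqueness theorem before reconstructing $C(u,t)$ by constant-speed parametrization. Your added remark that the vector field is tangent to $\Sigma\times\Sigma$ is a small but welcome clarification that the paper leaves implicit.
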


\begin{proof}
Note that for any given $p \neq q \in \Sigma$, the outward unit conormal $\eta$ at the end points $p,q$ of the chord $C_{p,q}$ is given by
\[ \eta(p)=\frac{p-q}{|p-q|} =-\eta(q). \]
Therefore, Definition \ref{D:CSF} (c) is equivalent to the following system of nonlinear system of first order ODEs:
\begin{equation}
\label{E:CSF'}
\left\{ \begin{array}{l}
\frac{dp}{dt}=- \frac{\pi_p(p-q)}{|p-q|} \\
\frac{dq}{dt}=- \frac{\pi_q(q-p)}{|q-p|} 
\end{array} \right.
\end{equation}
where $\pi_x:\R^n \to \R^n$ is the orthogonal projection onto $T_x\Sigma$ (which depends smoothly on $x$). Since the right hand side of (\ref{E:CSF'}) is a Lipschitz function in $p$ and $q$ as long as $|p-q|$ is bounded away from $0$. Therefore, the existence and uniqueness to the initial value problem follows from the fundamental local existence and uniqueness theorem for first order ODE systems (see for example \cite[Theorem 2.1]{Taylor}). Hence, (\ref{E:CSF'}) is uniquely solvable on some interval $t \in [0,\epsilon)$ for any initial data $p(0)=p_0$ and $q(0)=q_0$ such that $p_0 \neq q_0 \in \Sigma$. Finally we get a solution $C(u,t):I \times [0,\epsilon) \to \R^n$ to the chord shortening flow by defining $C(\cdot,t):I \to \R^n$ to be the constant speed parametrization of the chord $C_{p_t,q_t}$.
\end{proof}

\section{Evolution equations}
\label{S:evolution}

In this section, we derive the evolution of some geometric quantities under the chord shortening flow relative to any $k$-dimensional submanifold $\Sigma$ in $\R^n$. 

\begin{definition}
Let $C:I=[0,1] \to \R^n$ be a chord joining $p$ to $q$. For any (vector-valued) function $f:\partial I =\{ 0,1\} \to \R^m$, we define the \emph{$L^2$-norm} $\|f\|_{L^2}$ and the \emph{sum} $\overline{f}$ of $f$ to be\begin{equation}
\label{E:norm}
\|f\|_{L^2}:=( |f(0)|^2+|f(1)|^2)^{\frac{1}{2}} \qquad \text{and} \qquad \overline{f}:=f(0)+f(1).
\end{equation}
Also, we define the $\frac{1}{2}$-Laplacian of $f$ relative to the chord $C$ to be the vector-valued function $\DN f:\partial I=\{0,1\} \to \R^m$ defined by
\begin{equation}
\label{E:DN}
(\Delta^{\frac{1}{2}} f)(0)= \frac{f(0)-f(1)}{\ell} =- (\Delta^{\frac{1}{2}} f)(1),
\end{equation}
where $\ell=|p-q|$ is the length of the chord $C$.
\end{definition}

\begin{lemma}
\label{L:DN}
Given any $f:\partial I \to \R^m$, we have $\overline{\DN f}=0$ and $\overline{\lan f, \DN f \ran}=\frac{\ell}{2} \|\DN f\|^2_{L^2} \leq \frac{2}{\ell} \|f\|^2_{L^2}$.
\end{lemma}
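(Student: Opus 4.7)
The proof will be a direct computation from the definitions, since the lemma is essentially algebraic. The plan has three parts, corresponding to the three assertions.

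First, the identity $\overline{\DN f} = 0$ is immediate from the antisymmetry built into the definition \eqref{E:DN}: by construction $(\DN f)(0) = -(\DN f)(1)$, so summing the two values gives zero. No real computation is needed.

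Next, for the equality $\overline{\lan f, \DN f\ran} = \tfrac{\ell}{2}\|\DN f\|_{L^2}^2$, I will expand the left-hand side using the definition of $\overline{(\,\cdot\,)}$ and \eqref{E:DN}. The plan is to write
\[
\overline{\lan f,\DN f\ran} = \Bigl\lan f(0),\tfrac{f(0)-f(1)}{\ell}\Bigr\ran + \Bigl\lan f(1),\tfrac{f(1)-f(0)}{\ell}\Bigr\ran,
\]
recognize the right-hand side as $\tfrac{1}{\ell}|f(0)-f(1)|^2$ after combining the two inner products, and separately compute
\[
\|\DN f\|_{L^2}^2 = |(\DN f)(0)|^2 + |(\DN f)(1)|^2 = \tfrac{2}{\ell^2}|f(0)-f(1)|^2
\]
from the definition of $\|\cdot\|_{L^2}$. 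Multiplying by $\ell/2$ gives the same quantity $\tfrac{1}{\ell}|f(0)-f(1)|^2$, establishing the equality.

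Finally, for the inequality $\tfrac{\ell}{2}\|\DN f\|_{L^2}^2 \leq \tfrac{2}{\ell}\|f\|_{L^2}^2$, it suffices to show $|f(0)-f(1)|^2 \leq 2(|f(0)|^2 + |f(1)|^2)$, which is a standard consequence of the triangle inequality followed by AM--GM (or equivalently the parallelogram identity applied to $f(0)$ and $-f(1)$): $|f(0)-f(1)|^2 \leq (|f(0)|+|f(1)|)^2 \leq 2(|f(0)|^2+|f(1)|^2)$. Dividing by $\ell$ yields the stated bound.

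There is no real obstacle; the only point to get right is keeping track of the antisymmetric sign in \eqref{E:DN} when expanding the sum, which is exactly what makes the two cross terms $-\lan f(0),f(1)\ran$ add up (rather than cancel) to produce the clean expression $|f(0)-f(1)|^2/\ell$. The lemma is really just recording that, with the normalizations chosen in \eqref{E:norm}--\eqref{E:DN}, the operator $\DN$ behaves like a discrete half-Laplacian: it has zero mean, its associated quadratic form is a nonnegative multiple of $\|\DN f\|_{L^2}^2$, and it is bounded by $2/\ell$ times the identity, which is what will be used later to control the evolution along the chord shortening flow.
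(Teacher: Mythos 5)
Your proof is correct and follows exactly the route the paper has in mind: the paper's own proof is simply the one-line remark that the lemma ``follows directly from (\ref{E:norm}) and (\ref{E:DN})'', and your computation is precisely that direct verification spelled out. All three parts check out, including the algebra leading to $\tfrac{1}{\ell}|f(0)-f(1)|^2$ and the elementary bound $|f(0)-f(1)|^2 \leq 2(|f(0)|^2+|f(1)|^2)$.
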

\begin{proof}
It follows directly from (\ref{E:norm}) and (\ref{E:DN}).
\end{proof}

\begin{definition}
Let $C=C_{p,q}:I \to \R^n$ be a chord joining two distinct points $p,q$ on $\Sigma$. We define the tangential outward conormal $\eta^T:\partial I=\{0,1\} \to \R^n$ to be the tangential component (relative to $\Sigma$) of the outward unit conormal of $C$, i.e. (recall (\ref{E:1st-var-b}) and Definition \ref{D:Cpq})
\begin{equation}
\label{E:eta-T}
\eta^T(u)=\pi_{C(u)} \eta \qquad \text{for $u=0,1$}.
\end{equation}
\end{definition}

\begin{lemma}[Evolution of chord length]
\label{L:length-evolution}
Suppose $C(u,t):I \times [0,T) \to \R^n$ is a solution to the chord shortening flow relative to $\Sigma$ as in Definition \ref{D:CSF}. If we denote the length of the chord $C(t)$ at time $t$ by
\[ \ell(t):=d(C(0,t),C(1,t)),\]
then $\ell$ is a non-increasing function of $t$ and (recall (\ref{E:norm}) and (\ref{E:eta-T}))
\begin{equation}
\label{E:ell-evolution}
\frac{d \ell}{dt} = - \|\eta^T\|_{L^2}^2 \leq 0 .
\end{equation}
\end{lemma}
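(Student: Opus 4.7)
The proof should be essentially an immediate consequence of the first variation formula (\ref{E:1st-var-b}) combined with the defining equations of the chord shortening flow. The plan is as follows.

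First, I would invoke the first variation formula (\ref{E:1st-var-b}) derived earlier in the section. This is legitimate because at every time $t$, the curve $C(t)$ is (by Definition \ref{D:CSF}(b)) a constant speed parametrization of the straight chord $C_{p_t,q_t}$, hence a geodesic in $\R^n$; this makes the interior integral term in the standard first variation formula vanish, leaving only the boundary contributions
\[
\frac{d\ell}{dt} = \langle \tfrac{dp_t}{dt}, \eta^T(p_t)\rangle + \langle \tfrac{dq_t}{dt}, \eta^T(q_t)\rangle,
\]
where we already used that $dp_t/dt, dq_t/dt \in T\Sigma$ to replace $\eta$ by $\eta^T$.

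Next, I would substitute the chord shortening flow equations from Definition \ref{D:CSF}(c) (equivalently, system (\ref{E:CSF'})), namely $\tfrac{dp_t}{dt} = -\eta^T(p_t)$ and $\tfrac{dq_t}{dt} = -\eta^T(q_t)$, into the above. This immediately gives
\[
\frac{d\ell}{dt} = -|\eta^T(p_t)|^2 - |\eta^T(q_t)|^2.
\]
Finally, recognizing the right-hand side via the $L^2$-norm notation (\ref{E:norm}) with $f = \eta^T:\partial I \to \R^n$ yields $\frac{d\ell}{dt} = -\|\eta^T\|_{L^2}^2$, which is manifestly non-positive, so $\ell$ is non-increasing.

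There is no real obstacle here; the only point requiring a sentence of justification is why the first variation formula applies at each time $t$, i.e.\ why the interior term drops out. That reduces to the observation that straight line segments in $\R^n$ are geodesics, which is built into clause (b) of Definition \ref{D:CSF}. The statement of Lemma \ref{L:DN} is not needed for this particular lemma but will be useful in later evolution computations.
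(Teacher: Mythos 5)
Your proof is correct and is exactly the argument the paper has in mind: the paper's one-line proof simply cites the first variation formula (\ref{E:1st-var-b}), and your elaboration — substituting the flow equations $dp_t/dt = -\eta^T(p_t)$, $dq_t/dt = -\eta^T(q_t)$ from Definition \ref{D:CSF}(c) and recognizing the resulting sum of squares as $\|\eta^T\|_{L^2}^2$ — is precisely how that citation is meant to be unpacked. No gap, no divergence in approach.
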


\begin{proof}
It follows directly from the first variation formula (\ref{E:1st-var-b}).
\end{proof}

\begin{theorem}
\label{T:eta-T-evolution}
Suppose $C(u,t):I \times [0,T) \to \R^n$ is a solution to the chord shortening flow relative to $\Sigma$ as in Definition \ref{D:CSF}. Then the tangential outward conormal $\eta^T$ of the chord $C(t)$ satisfies the following evolution equation:
\begin{eqnarray}
\label{E:eta-T-evolution}
\frac{\partial}{\partial t} \eta^T &=& - \DN \eta^T + \frac{1}{\ell} \|\eta^T\|^2_{L^2} \eta^T  - \sum_{i=1}^k \lan A(\eta^T,e_i),\eta^N \ran e_i  \\
&& \phantom{aaaaaaaaa} - \frac{1}{\ell}  (\overline{\eta^T}-\eta^T)^N - A(\eta^T,\eta^T), \nonumber
\end{eqnarray}
where $\{e_i\}_{i=1}^k$ is an orthonormal basis of $T\Sigma$ at the end points of $C(t)$. Here, $(\cdot)^N$ denotes the normal component of a vector relative to $\Sigma$ and $A:T\Sigma \times T\Sigma \to N\Sigma$ is the second fundamental form of $\Sigma$ defined by $A(u,v):=(D_u v)^N$.
\end{theorem}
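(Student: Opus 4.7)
The plan is to compute $\partial_t \eta^T$ at the endpoint $u=0$; by symmetry the calculation at $u=1$ is essentially identical. Writing $\eta^T(0,t) = \pi_{p_t}\bigl(\eta(0,t)\bigr)$ with $\eta(0,t) = (p_t - q_t)/\ell(t)$, I apply the product rule
\[ \partial_t \eta^T(0) = \bigl(\partial_t \pi_{p_t}\bigr)\eta(0) + \pi_{p_t}\bigl(\partial_t \eta(0)\bigr) \]
and treat the two pieces separately.

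First, I would compute $\partial_t \eta(0)$ directly. Using the ODE system (\ref{E:CSF'}), namely $\dot p_t = -\eta^T(0)$ and $\dot q_t = -\eta^T(1)$, together with $\dot\ell = -\|\eta^T\|_{L^2}^2$ from Lemma \ref{L:length-evolution}, a short calculation gives
\[ \partial_t \eta(0) = -\DN \eta^T(0) + \tfrac{1}{\ell}\|\eta^T\|_{L^2}^2\,\eta(0). \]
Projecting onto $T_{p_t}\Sigma$ yields the term $\tfrac{1}{\ell}\|\eta^T\|_{L^2}^2\,\eta^T(0)$ and a ``half-Laplacian'' contribution $-\pi_{p_t}\DN \eta^T(0)$. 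Since $\eta^T(0) \in T_{p_t}\Sigma$ already, the only non-tangential correction in $\pi_{p_t}\DN\eta^T(0)$ comes from $\eta^T(1)$; using $(\overline{\eta^T} - \eta^T)(0) = \eta^T(1)$ one then verifies the identity
\[ -\pi_{p_t}\DN \eta^T(0) = -\DN\eta^T(0) - \tfrac{1}{\ell}\bigl(\overline{\eta^T} - \eta^T\bigr)^N(0), \]
which accounts for the first and fourth terms on the right-hand side of (\ref{E:eta-T-evolution}).

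The key step is computing $\bigl(\partial_t \pi_{p_t}\bigr)\eta(0)$. I would choose a smooth local orthonormal frame $\{e_i\}_{i=1}^k$ for $T\Sigma$ near $p_t$ satisfying $\nabla^\Sigma_{\eta^T} e_i = 0$ at $p_t$ (i.e.\ normal coordinates along the velocity direction $-\eta^T$). Then $D_{\dot p_t} e_i = -D_{\eta^T} e_i = -A(\eta^T, e_i)$ at $p_t$, using the definition $A(u,v) = (D_u v)^N$. Differentiating the formula $\pi_x v = \sum_i \langle v, e_i(x)\rangle e_i(x)$ in $t$ with $v = \eta(0)$ held fixed produces
\[ \bigl(\partial_t \pi_{p_t}\bigr)\eta = -\sum_{i=1}^k \langle \eta, A(\eta^T, e_i)\rangle\, e_i - \sum_{i=1}^k \langle \eta, e_i\rangle\, A(\eta^T, e_i). \]
Since $A(\eta^T, e_i) \in N_{p_t}\Sigma$, only $\eta^N$ contributes to the first sum; the second sum collapses to $A(\eta^T, \eta^T)$ by bilinearity and the identity $\sum_i \langle \eta, e_i\rangle e_i = \eta^T$. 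Together these produce the two remaining curvature terms in (\ref{E:eta-T-evolution}).

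Combining the two pieces gives the formula at $u=0$; the identical argument at $u=1$ (using $\eta(1) = -\eta(0)$, $(\overline{\eta^T} - \eta^T)(1) = \eta^T(0)$, and swapping the roles of $p_t$ and $q_t$) completes the proof. The main technical difficulty is bookkeeping: one has to track carefully which vectors lie in $T_{p_t}\Sigma$, in $T_{q_t}\Sigma$, or in neither, since $\eta^T$ is an $\R^n$-valued function on the two-point set $\partial I$ whose two values sit in \emph{different} tangent spaces. The non-local normal correction $-\tfrac{1}{\ell}(\overline{\eta^T} - \eta^T)^N$ in the statement encodes precisely the mismatch that appears when comparing $\eta^T(0)$ and $\eta^T(1)$ via the Euclidean difference defining $\DN$.
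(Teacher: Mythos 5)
Your proof is correct, and the end result matches the paper term for term. The route is a mild variant of the paper's: you decompose $\eta^T = \pi_{p_t}\eta$ and apply the product rule $(\partial_t\pi_{p_t})\eta + \pi_{p_t}\partial_t\eta$, computing $\partial_t\eta(0)$ directly from the endpoint ODE (\ref{E:CSF'}) and $\dot\ell = -\|\eta^T\|^2_{L^2}$. The paper instead writes $\eta^T = -\frac{1}{\ell}\sum_i\langle\partial_u,e_i\rangle e_i$, treats $\partial_t$ as a Jacobi field along the chord, and uses $[\partial_u,\partial_t]=0$ to get $D_{\partial_t}\partial_u = \eta^T(0)-\eta^T(1)$ before differentiating. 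Both arguments hinge on the same two ingredients: a frame normal along the velocity direction so that $D_{\dot p_t}e_i = -A(\eta^T,e_i)$, and the careful separation of $\pi_{p_t}\eta^T(1)$ from $\eta^T(1)$, which is exactly where the non-local correction $-\frac{1}{\ell}(\overline{\eta^T}-\eta^T)^N$ originates. Your phrasing is slightly more elementary (no commutator, no Jacobi-field language) and makes the source of the normal correction more transparent, at the cost of carrying the extrinsic chord direction $(p_t-q_t)/\ell$ explicitly; the paper's phrasing generalizes more readily to curved ambient manifolds, which the author flags as forthcoming work.
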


\begin{proof}
Let $C(u,t):I \times [0,T) \to \mathbb{R}^n$ be a solution to the chord shortening flow relative to $\Sigma$. Since $C(t)=C( \cdot, t):I \to \mathbb{R}^n$ is a family of chords which are parametrized proportional to arc length, $\frac{\partial}{\partial t}$ is a Jacobi field (not necessarily normal) along each chord which can be explicitly expressed as
\[ \frac{\partial }{\partial t} = -(1-u) \,  \eta^T(0) - u \, \eta^T(1),\]
where $\eta$ is the outward unit conormal for $C(t)$. Since $[\frac{\partial}{\partial u}, \frac{\partial }{\partial t}]=0$, we have
\begin{equation}
\label{E:commutator}
D_{\frac{\partial}{\partial t}} \frac{\partial }{\partial u}=D_{\frac{\partial}{\partial u}} \frac{\partial }{\partial t} = \eta^T(0)-\eta^T(1). 
\end{equation}
Moreover, as $C(t)$ is parametrized with constant speed, we have $\| \frac{\partial}{\partial u}\|= \ell$, thus
\[ -\eta(0)= \frac{1}{\ell} \left.\frac{\partial }{\partial u} \right|_{u=0} \quad  \text{and} \quad \eta(1)=\frac{1}{\ell} \left. \frac{\partial }{\partial u} \right|_{u=1}.\]
Fix $u=0$. Let $p=C(0,t) \in \Sigma$ and $\{e_1,\cdots, e_k\}$ be an orthonormal basis of $T_p \Sigma$ such that $(D_{e_i} e_j(p))^T=0$ for $i,j=1,\cdots,k$. Therefore, we have
\begin{equation}
\label{E:A}
D_{\frac{\partial}{\partial t}} e_i = -A(\eta^T,e_i). 
\end{equation}
Using Lemma \ref{L:length-evolution}, (\ref{E:commutator}) and (\ref{E:A}), we have:
\begin{eqnarray*}
\frac{\partial \eta^T}{\partial t} &=& \frac{\partial}{\partial t} \left( -\frac{1}{\ell} \right) \sum_{i=1}^k \langle \frac{\partial }{\partial u}, e_i \rangle e_i- \frac{1}{\ell} \sum_{i=1}^k \frac{\partial }{\partial t} \left( \langle \frac{\partial }{\partial u},e_i \rangle  e_i \right)\\
&=& \frac{1}{\ell} \|\eta^T\|^2_{L^2} \eta^T - \frac{1}{\ell} \sum_{i=1}^k \left( \langle D_{\frac{\partial}{\partial u}} \frac{\partial}{\partial t}, e_i \rangle e_i+ \langle \frac{\partial}{\partial u}, D_{\frac{\partial}{\partial t}} e_i \rangle e_i  + \langle \frac{\partial }{\partial u},e_i \rangle D_{\frac{\partial}{\partial t}} e_i \right) \\
&=& \frac{1}{\ell} \|\eta^T\|^2_{L^2} \eta^T - \frac{\eta^T}{\ell} - A(\eta^T,\eta^T) - \frac{1}{\ell} \sum_{i=1}^k \left( \lan -\eta^T(1),e_i \ran e_i + \ell \langle \eta^N, A(\eta^T,e_i) \rangle e_i  \right) \\
&=&  - \DN \eta^T + \frac{1}{\ell} \|\eta^T\|^2_{L^2} \eta^T  - \sum_{i=1}^k \lan A(\eta^T,e_i),\eta^N \ran e_i - \frac{1}{\ell}  (\overline{\eta^T}-\eta^T)^N - A(\eta^T,\eta^T).
\end{eqnarray*}
A similar calculation yields (\ref{E:eta-T-evolution}) at $u=1$. This proves the proposition.
\end{proof}

\begin{corollary}
Under the same assumptions as in Theorem \ref{T:eta-T-evolution}, we have
\begin{equation}
\label{E:eta-norm-evolution}
\frac{1}{2} \frac{d}{dt} \|\eta^T\|^2_{L^2}= - \frac{\ell}{2}  \| \DN \eta^T \|^2_{L^2}  + \frac{1}{\ell} \|\eta^T\|^4_{L^2} -\overline{\lan A(\eta^T,\eta^T), \eta \ran}.
\end{equation}
\end{corollary}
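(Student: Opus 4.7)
The plan is to differentiate $\|\eta^T\|^2_{L^2}=|\eta^T(0)|^2+|\eta^T(1)|^2$ directly in time, so that
$$\frac{1}{2}\frac{d}{dt}\|\eta^T\|^2_{L^2}=\overline{\langle \eta^T,\partial_t \eta^T\rangle},$$
and then substitute the evolution equation \eqref{E:eta-T-evolution} term by term, taking the $L^2$-pairing with $\eta^T$ on $\partial I$.

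The bookkeeping goes as follows. For the first term, Lemma \ref{L:DN} gives directly $\overline{\langle \eta^T, -\DN \eta^T\rangle}=-\tfrac{\ell}{2}\|\DN \eta^T\|^2_{L^2}$. For the second term, $\tfrac{1}{\ell}\|\eta^T\|^2_{L^2}\,\overline{\langle \eta^T,\eta^T\rangle}=\tfrac{1}{\ell}\|\eta^T\|^4_{L^2}$. For the third term, I would use that $\{e_i\}$ is an orthonormal basis of $T\Sigma$ at each endpoint, so at each endpoint
$$\sum_{i=1}^k \langle A(\eta^T,e_i),\eta^N\rangle\,\langle e_i,\eta^T\rangle =\Big\langle A\!\Big(\eta^T,\sum_{i=1}^k\langle e_i,\eta^T\rangle e_i\Big),\eta^N\Big\rangle=\langle A(\eta^T,\eta^T),\eta^N\rangle,$$
and since $A(\eta^T,\eta^T)\in N\Sigma$ one has $\langle A(\eta^T,\eta^T),\eta^N\rangle=\langle A(\eta^T,\eta^T),\eta\rangle$, producing the term $-\overline{\langle A(\eta^T,\eta^T),\eta\rangle}$. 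The remaining two terms, $-\tfrac{1}{\ell}(\overline{\eta^T}-\eta^T)^N$ and $-A(\eta^T,\eta^T)$, are both purely normal to $\Sigma$, so their pairing with the tangential vector $\eta^T$ vanishes pointwise and contributes nothing.

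Summing the four surviving contributions reproduces \eqref{E:eta-norm-evolution} exactly. There is no real obstacle; the only place where one must be careful is to recognize that the $A(\eta^T,\eta^T)$ term of \eqref{E:eta-T-evolution} is absorbed by the normality argument, and that the $\eta^N$ appearing in the third term may be harmlessly replaced by the full conormal $\eta$ in the final formula, again because $A(\eta^T,\eta^T)$ has no tangential component.
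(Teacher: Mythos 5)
Your proof is correct and follows essentially the same route as the paper: pair the evolution equation \eqref{E:eta-T-evolution} with $\eta^T$, use Lemma \ref{L:DN} for the $\DN$ term, collapse the $\sum_i$ term to $\langle A(\eta^T,\eta^T),\eta^N\rangle$ by bilinearity (which equals $\langle A(\eta^T,\eta^T),\eta\rangle$ since $A(\eta^T,\eta^T)$ is normal), and discard the last two terms because they are normal to $\Sigma$. The paper's proof compresses this to a single line but uses exactly the same observations.
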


\begin{proof}
Using (\ref{E:eta-T-evolution}) and Lemma \ref{L:DN}, noting that the last two terms of (\ref{E:eta-T-evolution}) are normal to $\Sigma$, we have
\[
\frac{1}{2} \frac{d}{dt} \|\eta^T\|^2_{L^2} = \overline{\lan \eta^T, \frac{\partial \eta^T}{\partial t} \ran} 
= - \frac{\ell}{2}  \| \DN \eta^T \|^2_{L^2}  + \frac{1}{\ell} \|\eta^T\|^4_{L^2} -\overline{\lan A(\eta^T,\eta^T), \eta^N \ran}.
\]
\end{proof}

\begin{example}
\label{ex:flat2}
In the case of Example \ref{ex:flat}, we have $\eta^T(0)=-\eta^T(1)$ equals to a constant unit vector independent of $t$ and hence both sides are identically zero in (\ref{E:eta-T-evolution}) and (\ref{E:eta-norm-evolution}). 
\end{example}

\begin{example}
\label{ex:strip}
Consider the vertical strip $\Omega:=\{(x,y) \in  \mathbb{R}^2 : 0 \leq x \leq 1\}$ with boundary $\Sigma=\partial \Omega$ consists of two parallel vertical lines. Let $p_0=(0,-h/2)$ and $q_0=(1,h/2)$ for some $h>0$. It is easy to check that the solution to the chord shortening flow with initial chord $C_{p_0,q_0}$ is given by $p_t=(0,-h(t)/2)$, $q_t=(1,h(t)/2)$ where $h(t)$ is the unique solution to the ODE
\[ h'(t)=-\frac{2h(t)}{\sqrt{1+h^2(t)}} \]
with initial condition $h(0)=h$. From this we can see that the solution $h(t)$ exists for all $t \geq 0$. Moreover, $-h'(t) \leq 2h(t)$ implies $h(t) \leq h e^{-2t}$ and thus $h(t) \to 0$ exponentially as $t \to +\infty$. Therefore, the chord converges to a chord meeting $\partial \Omega$ orthogonally. In this case, we have
\[ -\eta^T(0)= \frac{1}{\sqrt{1+h^2(t)}} (0,h(t)) = \eta^T(1), \]
which satisfies the evolution equation (\ref{E:eta-T-evolution}) and $\eta^T \to 0$ as $t \to +\infty$. See Figure \ref{F:strip}.
\end{example}

\begin{figure}[h]
\centering
\includegraphics[height=6cm]{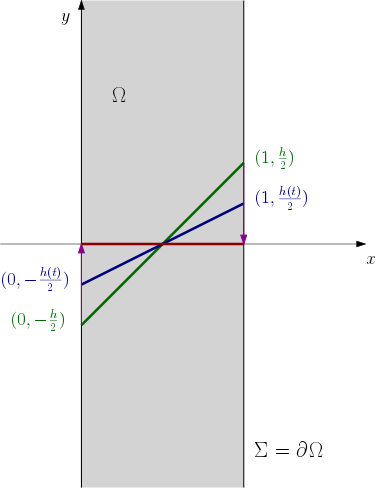}
\caption{A chord converging to a limit chord orthogonal to $\partial \Omega$.}
\label{F:strip}
\end{figure}

\section{Long time existence}
\label{S:convergence}

In this section, we prove our main convergence result which says that the only two possible convergence scenarios are given in Example \ref{ex:flat2} and \ref{ex:strip}. One should compare this convergence result with a similar result of Grayson \cite[Theorem 0.1]{Grayson89} for curve shortening flow on surfaces. For simplicity, we assume that $\Sigma$ is compact. However, the same result holds for non-compact $\Sigma$ which satisfies some convexity condition at infinity as in \cite{Grayson89}.

\begin{theorem}[Long time convergence]
\label{T:convergence}
Let $\Sigma \subset \mathbb{R}^n$ be a compact $k$-dimensional smooth submanifold without boundary. Suppose $C(0):I \to \mathbb{R}^n$ is a chord with distinct end points on $\Sigma$. Then there exists a maximally defined smooth family of chords $C(t):I \to \mathbb{R}^n$ for $t \in [0,T)$ with distinct end points on $\Sigma$, and $C(t)=C(\cdot,t)$ where $C(u,t):I \times [0,T) \to \R^n$ is the unique solution to the chord shortening flow (relative to $\Sigma$) as in Definition \ref{D:CSF}. 

Moreover, if $T < + \infty$, then $C(t)$ converges to a point on $\Sigma$ as $t \to T$. If $T$ is infinite, then $C(t)$ converges to an orthogonal geodesic chord with end points on $\Sigma$ as $t \to \infty$.
\end{theorem}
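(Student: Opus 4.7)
The natural approach is to work with the reduction to the ODE system (\ref{E:CSF'}) on $\Sigma \times \Sigma \setminus \Delta$, where $\Delta = \{(p,p) : p \in \Sigma\}$ is the diagonal, and to combine standard ODE continuation with the Lyapunov property from Lemma \ref{L:length-evolution}. First I would invoke Theorem \ref{T:short-time-existence} to produce a unique smooth solution on a maximal interval $[0,T)$. Since the vector field in (\ref{E:CSF'}) is smooth on all of $\Sigma \times \Sigma \setminus \Delta$ and $\Sigma$ is compact, the standard ODE continuation principle guarantees that $T < \infty$ can only happen if $(p_t, q_t)$ accumulates on $\Delta$, i.e., $\liminf_{t \to T} \ell(t) = 0$.

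Next I would record the a priori velocity bound $|dp/dt| = |\eta^T(p_t)| \le 1$ and similarly at $q_t$, which follows immediately from (\ref{E:CSF'}) and $|\eta| = 1$. Together with Lemma \ref{L:length-evolution} (which shows $\ell$ is monotonically non-increasing), this gives two clean consequences. In the finite-time case $T < \infty$: the endpoints $p_t, q_t$ are $1$-Lipschitz in $t$ on $[0,T)$, hence both extend continuously to $t = T$, and since $\ell(t) \to 0$ the two limit points must coincide at some $p_\star \in \Sigma$, giving convergence of $C(t)$ to a point.

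For the long-time case $T = \infty$, the heart of the matter is to show $\ell_\infty := \lim_{t \to \infty} \ell(t) > 0$. I would prove this by an elementary local computation near $\Delta$: for $p, q \in \Sigma$ close together, writing $q = \exp_p^\Sigma(v) + O(|v|^2)$ with $v \in T_p\Sigma$, the unit vector $\eta(p) = (p-q)/|p-q|$ is tangent to $\Sigma$ at $p$ up to an error of order $\ell$ controlled by the second fundamental form of $\Sigma$. Consequently $\|\eta^T\|_{L^2}^2 \ge 2 - C \ell^2$ for $\ell$ small (with $C$ depending only on $\Sigma$), so $d\ell/dt \le -1$ once $\ell$ drops below a threshold, forcing finite-time extinction. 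Contrapositively, $T = \infty$ implies $\ell(t)$ is bounded below by a positive constant, and thus by the monotonicity $\ell(t) \searrow \ell_\infty > 0$.

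With $\inf_t \ell(t) > 0$, the trajectory $(p_t, q_t)$ stays in the compact set $\{(p,q) \in \Sigma \times \Sigma : |p-q| \ge \ell_\infty\}$, and the energy identity
\[ \int_0^\infty \|\eta^T\|_{L^2}^2 \, dt = \ell(0) - \ell_\infty < \infty \]
produces a sequence $t_n \to \infty$ along which $(p_{t_n}, q_{t_n}) \to (p_\infty, q_\infty)$ with $\eta^T \to 0$ at both endpoints; by continuity of $\pi_x$ this limit is an orthogonal geodesic chord. The main obstacle is upgrading subsequential convergence to full convergence of $C(t)$ as $t \to \infty$. Here I would apply the \L{}ojasiewicz--Simon gradient inequality to the smooth (in fact, real-analytic in $p, q$) function $(p,q) \mapsto |p-q|$ on the finite-dimensional smooth manifold $\Sigma \times \Sigma \setminus \Delta$ near the subsequential limit: this yields $\|\eta^T\|_{L^2} \ge c|\ell - \ell_\infty|^{1-\theta}$ for some $\theta \in (0, 1/2]$ locally, which combined with $d\ell/dt = -\|\eta^T\|_{L^2}^2$ implies the trajectory has finite length in $\Sigma \times \Sigma$ and hence a unique limit point.
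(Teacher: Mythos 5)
Your strategy agrees with the paper's up through the dichotomy between finite-time extinction and $\ell$ bounded below: the paper cites \cite[Lemma~5.2]{Colding-Minicozzi} for the lower bound on $\|\eta^T\|_{L^2}^2$ near the diagonal, whereas you sketch the local Taylor-expansion argument directly; both yield the same conclusion that $d\ell/dt$ is bounded away from zero once $\ell$ is small, forcing finite-time extinction. Your treatment of the finite-time case via the $1$-Lipschitz bound $|dp_t/dt| \le 1$ and continuous extension of the endpoints to $t=T$ is also clean and correct, and the energy identity $\int_0^\infty \|\eta^T\|_{L^2}^2\,dt = \ell(0) - \ell_\infty$ is the same as in the paper.

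The genuine gap is the final step: upgrading subsequential to full convergence via the \L{}ojasiewicz--Simon gradient inequality. That inequality is a theorem for real-analytic functions. While the ambient length $|p-q|$ is real-analytic on $\R^n \times \R^n \setminus \Delta$, the gradient flow you are analysing lives on $\Sigma \times \Sigma$: the gradient field is $(-\pi_p \eta, -\pi_q \eta)$, and the projection $\pi_x$ onto $T_x\Sigma$ is only as regular as $\Sigma$. The theorem assumes $\Sigma$ is merely $C^\infty$, and the \L{}ojasiewicz inequality is well known to fail for general $C^\infty$ functions (which is precisely why $C^\infty$ gradient flows can spiral without converging). So this step does not go through as stated unless one additionally assumes $\Sigma$ is real-analytic, which is not part of the hypotheses.

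The paper's route avoids this issue. Its key ingredient, which your proposal is missing, is Lemma~\ref{L:kto0}: proving that $\|\eta^T\|_{L^2}^2 \to 0$ as a full limit, not merely subsequentially. The mechanism is to combine the integrability of $\|\eta^T\|_{L^2}^2$ over $[0,\infty)$ (which you already have) with a one-sided differential inequality $\tfrac{d}{dt}\|\eta^T\|_{L^2}^2 \le c\,\|\eta^T\|_{L^2}^2$ extracted from the evolution equation (\ref{E:eta-norm-evolution}), using $\ell \ge \ell_\infty > 0$ and the bound on the second fundamental form of the compact $\Sigma$; an integrable nonnegative $f$ with $f' \le cf$ must tend to zero, which the paper verifies by a short contradiction argument. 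This makes essential use of the second-order evolution structure (\ref{E:eta-T-evolution})--(\ref{E:eta-norm-evolution}) of the flow, which your proposal never invokes, and requires no analyticity. To close your argument in the stated generality, you should derive and use the evolution of $\|\eta^T\|_{L^2}^2$ rather than appealing to \L{}ojasiewicz--Simon; the first-variation formula and the energy identity alone are not enough.
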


By the short time existence and uniqueness theorem (Theorem \ref{T:short-time-existence}), the chord shortening flow continues to exist and is unique as long as $\ell >0$. Therefore, $C(t)$ is uniquely defined for $t \in [0,T)$ where either $T < +\infty$ or $T=+\infty$. 

\begin{lemma}
\label{L:ell-limit}
Let $C(t)$, $t \in [0,T)$, be a maximally defined chord shortening flow. Then one of the following holds:
\begin{itemize}
\item[(a)] $T < +\infty$ and $C(t)$ shrinks to a point on $\Sigma$ as $t \to T$;
\item[(b)] $T=+\infty$ and $\ell(t) \to \ell_\infty >0$ as $t \to +\infty$.
\end{itemize}
\end{lemma}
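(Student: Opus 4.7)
The plan is to exploit the monotonicity of $\ell(t)$ from Lemma \ref{L:length-evolution} together with the short-time existence result (Proposition \ref{T:short-time-existence}) and a uniform quantitative decrease estimate for short chords. Since $\ell$ is non-increasing on $[0,T)$, the limit $\ell_\infty := \lim_{t \to T^-} \ell(t) \in [0, \ell(0)]$ exists. I will argue that (i) if $T < +\infty$ then necessarily $\ell_\infty = 0$ and the chord collapses to a single point, and (ii) if $T = +\infty$ then $\ell_\infty$ must be strictly positive.

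Statement (i) follows easily from the ODE system (\ref{E:CSF'}): its right-hand side is smooth and uniformly Lipschitz on $\{(p,q) \in \Sigma \times \Sigma : |p-q| \ge \ell_0\}$ for any $\ell_0 > 0$, by compactness of $\Sigma$. So if $T < \infty$ and $\ell_\infty > 0$, standard ODE continuation would extend $(p_t, q_t)$ past $t = T$, contradicting maximality. The remaining piece of (i) — convergence to a single point — uses that $\pi_p$ has operator norm one, giving $|\dot p_t|, |\dot q_t| \le 1$ by (\ref{E:CSF'}). Thus $p_t$ is $1$-Lipschitz on $[0,T)$ and hence Cauchy as $t \to T^-$, so $p_t \to p_\infty \in \Sigma$; since $|p_t - q_t| = \ell(t) \to 0$, we also have $q_t \to p_\infty$, and the entire chord collapses to $p_\infty$.

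The heart of the argument is (ii), which requires showing that as $\ell(t)$ becomes small its rate of decrease cannot vanish. I would establish the following near-tangency estimate: there exist constants $\ell_0, C > 0$ depending only on $\Sigma$ such that whenever $p \ne q$ in $\Sigma$ with $|p - q| < \ell_0$, the normal component $\eta^N(p)$ of the unit chord direction $(p-q)/|p-q|$ at $T_p\Sigma$ satisfies $|\eta^N(p)| \le C|p-q|$. The proof writes $\Sigma$ locally near $p$ as a graph $v \mapsto p + v + \psi(v)$ with $\psi$ taking values in $(T_p\Sigma)^\perp$, $\psi(0)=0$, $d\psi_0 = 0$; then $q - p = v + \psi(v)$ with $|\psi(v)| = O(|v|^2)$, so the normal part of $(q-p)/|q-p|$ has size $O(|v|) = O(|p-q|)$. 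Compactness of $\Sigma$ makes the constant uniform. Consequently $|\eta^T|^2 \ge 1 - C^2\ell^2$ at both endpoints, and (\ref{E:ell-evolution}) yields
\[ \frac{d\ell}{dt} = -\|\eta^T\|_{L^2}^2 \le -1 \quad \text{whenever } \ell(t) < \ell_0, \]
after possibly shrinking $\ell_0$. If $T = \infty$ and $\ell_\infty = 0$, pick $t_1$ with $\ell(t_1) < \ell_0$; the inequality gives $\ell(t) \le \ell(t_1) - (t - t_1)$ for $t \ge t_1$, forcing $\ell$ to hit zero at some finite $t \le t_1 + \ell(t_1)$, a contradiction. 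Hence $\ell_\infty > 0$, establishing (b).

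I expect the main obstacle to be the near-tangency estimate in the third paragraph: this is the only step where the geometry of $\Sigma$ enters nontrivially, and it is what upgrades the qualitative $d\ell/dt \le 0$ to the quantitative strict decrease needed to rule out the forbidden scenario $T = \infty$ with $\ell_\infty = 0$. The remaining ingredients — monotone convergence, ODE continuation on a compact target, and Lipschitz-Cauchy completeness — are formal, and compactness of $\Sigma$ is clearly essential here (as the paper notes, only a substitute convexity-at-infinity hypothesis can rescue the noncompact case).
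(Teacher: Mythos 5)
Your proof is correct and follows the same route as the paper's: monotonicity of $\ell$ gives a limit $\ell_\infty \ge 0$; ODE continuation (via uniform Lipschitzness of the right-hand side of (\ref{E:CSF'}) on $\{|p-q|\ge \ell_0\}$) rules out $T<\infty$ with $\ell_\infty>0$; and a uniform lower bound on $\|\eta^T\|^2_{L^2}$ for short chords forces strict, uniform decrease of $\ell$, ruling out $T=\infty$ with $\ell_\infty=0$. You go beyond the paper's exposition in two spots: you prove the short-chord estimate $|\eta^N(p)|\le C\,|p-q|$ from scratch by writing $\Sigma$ locally as a graph over $T_p\Sigma$, where the paper instead cites \cite[Lemma 5.2]{Colding-Minicozzi}; and you make ``shrinks to a point'' precise via the Lipschitz bound $|\dot p_t|,|\dot q_t|\le 1$ (so $p_t$ is Cauchy as $t\to T^-$ and $q_t\to p_\infty$ since $\ell\to 0$), whereas the paper's proof appeals to compactness of $\Sigma$, which on its own only yields subsequential limits. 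Both additions are sound and make the argument self-contained without changing its structure.
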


\begin{proof}
As $\ell(t)$ is a non-increasing function of $t$ by Lemma \ref{L:length-evolution}, it either converges to $0$ or to some positive number $\ell_\infty>0$ as $t \to T$. By short time existence (Theorem \ref{T:short-time-existence}), it cannot converge to $\ell_\infty>0$ in finite time. So when $T<+\infty$, $C(t)$ must converge to a point on $\Sigma$ by compactness of $\Sigma$. It remains to show that $\ell(t)$ cannot converge to $0$ if $T=+\infty$. We will prove this by a contradiction argument. 
Suppose, on the contrary, that $T=+\infty$ and $\ell(t) \to 0$ as $t \to +\infty$. Since $\Sigma$ is compact, there exists some constant $\epsilon_0>0$ such that for any two points $p,q \in \Sigma$ with $d(p,q)<\epsilon_0$, the chord $C_{p,q}$ joining them has $\|\eta^T\|^2_{L^2}$ be bounded from below by a universal positive constant (see, for example, \cite[Lemma 5.2]{Colding-Minicozzi}). By Lemma \ref{L:length-evolution}, $\ell(t)$ must decrease to zero in finite time, which is a contradiction.
\end{proof}

Next, we claim that if the flow exists for all time (i.e. $T=+\infty$), then it must converge to an orthogonal geodesic chord to $\Sigma$ as $t \to \infty$. Since $|\eta^T| \leq \|\eta^T\|_{L^2}$, it suffices to prove the following lemma. Theorem \ref{T:convergence} clearly follows from Lemma \ref{L:ell-limit} and \ref{L:kto0}.

\begin{lemma}
\label{L:kto0}
Under the same assumption as Lemma \ref{L:ell-limit} and suppose $T=+\infty$, then $\|\eta^T\|_{L^2} \to 0$ as $t \to +\infty$.
\end{lemma}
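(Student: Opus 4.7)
The plan is to combine the length evolution from Lemma \ref{L:length-evolution} with the evolution equation (\ref{E:eta-norm-evolution}) and apply a Barbalat-type argument. First, since $\ell(t)$ is monotone non-increasing with $\ell(t) \searrow \ell_\infty > 0$ by Lemma \ref{L:ell-limit}, integrating (\ref{E:ell-evolution}) in time yields
\[
\int_0^\infty \|\eta^T(t)\|^2_{L^2}\, dt \;=\; \ell(0) - \ell_\infty \;<\; \infty,
\]
so the non-negative function $f(t):=\|\eta^T(t)\|^2_{L^2}$ lies in $L^1([0,\infty))$.

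Second, I would establish a uniform upper bound $f'(t) \le K$ using the evolution equation (\ref{E:eta-norm-evolution}). The term $-\tfrac{\ell}{2}\|\DN\eta^T\|^2_{L^2}$ is non-positive and can be discarded for the upper bound. For the remaining two terms, the pointwise inequality $|\eta^T(u)| \le |\eta(u)|=1$ at $u=0,1$ gives $\|\eta^T\|^2_{L^2} \le 2$, and combined with $\ell(t)\ge \ell_\infty$ this controls $\tfrac{1}{\ell}\|\eta^T\|^4_{L^2} \le 4/\ell_\infty$. Compactness of $\Sigma$ provides a uniform bound $\|A\|_\infty < \infty$ on the second fundamental form, whence $\bigl|\overline{\lan A(\eta^T,\eta^T),\eta^N\ran}\bigr| \le 2\|A\|_\infty$. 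Altogether this gives the desired constant $K$ (depending on $\ell_\infty$, $\|A\|_\infty$) with $f'(t) \le K$ for all $t \ge 0$.

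Third, I would finish by the standard Barbalat-style observation: a non-negative integrable function with derivative bounded above must tend to zero. Indeed, if $f(t)\not\to 0$, choose $\epsilon>0$ and $t_n\to\infty$ with $f(t_n)\ge\epsilon$; the bound $f'\le K$ forces $f(s) \ge f(t_n) - K(t_n - s) \ge \epsilon/2$ on each interval $[t_n - \epsilon/(2K),\, t_n]$. Passing to a subsequence of $t_n$ separated enough that these intervals are disjoint contradicts $f\in L^1([0,\infty))$. Hence $f(t)\to 0$, which, combined with $|\eta^T(u)| \le \|\eta^T\|_{L^2}$, gives the lemma and completes the proof of Theorem \ref{T:convergence}.

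The main obstacle is Step 2: the evolution equation (\ref{E:eta-norm-evolution}) contains the \emph{positive} reaction term $\tfrac{1}{\ell}\|\eta^T\|^4_{L^2}$, which a priori could cause $\|\eta^T\|^2_{L^2}$ to grow. The crucial input that tames this term is the lower bound $\ell \ge \ell_\infty>0$ supplied by Lemma \ref{L:ell-limit}; without it the reaction term would be singular and the Barbalat argument would fail. Everything else is bookkeeping with the compactness of $\Sigma$.
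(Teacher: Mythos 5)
Your proposal is correct and follows essentially the same route as the paper: integrate the length monotonicity (\ref{E:ell-evolution}) to get $\int_0^\infty \|\eta^T\|^2_{L^2}\,dt < \infty$, then use (\ref{E:eta-norm-evolution}) together with $\ell \geq \ell_\infty > 0$, $\|\eta^T\|^2_{L^2} \leq 2$, and compactness of $\Sigma$ to control the time derivative, and finish by a Barbalat-type argument. The only cosmetic difference is that the paper keeps one factor of $\|\eta^T\|^2_{L^2}$ to get a Gronwall bound $f' \leq cf$ and then runs a somewhat more hands-on $1/n$ argument, whereas you absorb everything into a uniform bound $f' \leq K$ and invoke the standard Barbalat observation; both are valid and hinge on the same crucial input $\ell \geq \ell_\infty$ from Lemma \ref{L:ell-limit}.
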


\begin{proof}
Write $\ell_t=\ell(t)$ for $t \in [0,+\infty]$. By Lemma \ref{L:length-evolution} and \ref{L:ell-limit}, we have 
\begin{equation}
\label{E:ell-monotone}
\ell_0 \geq \ell_t \geq \ell_\infty>0 \quad \text{ for all $t$}. 
\end{equation}
Moreover, integrating the inequality in Lemma \ref{L:length-evolution} we obtain
\[ \ell_t- \ell_\infty = \int_{t}^\infty \|\eta^T\|^2_{L^2} \; d\tau \geq 0. \]
As a result, $\int_t^\infty \|\eta^T\|^2_{L^2} \; d\tau \to 0$ as $t \to \infty$. In other words, $\|\eta^T\|^2_{L^2}$ is $L^2$-integrable on $t \in [0,+\infty)$. If we can control the time derivative of $\|\eta^T\|_{L^2}^2$, then we can conclude that $\|\eta^T\|_{L^2} \to 0$ as $t \to \infty$. Using (\ref{E:eta-norm-evolution}), (\ref{E:ell-monotone}), Lemma \ref{L:DN} and $\|\eta^T \|^2_{L^2} \leq 2$, we have the following differential inequality
\begin{equation}
\label{E:norm-ineq} 
\frac{1}{2}\frac{d}{dt} \|\eta^T\|^2_{L^2} \leq \left( C+\frac{4}{\ell_\infty} \right) \|\eta^T\|^2_{L^2}
\end{equation}
where $C=\sup_{\Sigma} |A| > 0$ is a constant depending only on the compact submanifold $\Sigma$. We now combine (\ref{E:norm-ineq}) with the fact that $\int_t^\infty \|\eta^T\|^2_{L^2} \; d\tau \to 0$ as $t \to \infty$ to conclude that $\|\eta^T\|^2_{L^2} \to 0$ as $t \to \infty$.

To simplify notation, let $f(t):=\|\eta^T\|^2_{L^2}$ and $c:=C+\frac{4}{\ell_\infty}$. Then $\int_t^\infty f \to 0$ as $t \to \infty$ and $f' \leq cf$. We argue that $f(t) \to 0$ as $t \to \infty$. Suppose not, then there exists an increasing sequence $t_n \to +\infty$ such that 
\begin{equation}
\label{E:conv}
f(t_n) > \frac{1}{n} \qquad \text{and} \qquad \int_{t_n/2}^\infty f \leq \frac{1}{n^3}.
\end{equation}
We claim that there exists $t_n^* \in  (t_n -\frac{1}{n},t_n+\frac{1}{n})$ such that $f(t_n^*) \leq 1/n^2$. If not, then by (\ref{E:conv})
\[  \frac{2}{n^3} \leq \int_{t_n-\frac{1}{n}}^{t_n+\frac{1}{n}} f \leq \int_{t_n/2}^\infty f \leq \frac{1}{n^3}, \]
which is a contradiction. Using that $f' \leq c f$, we see that by (\ref{E:conv})
\[ \frac{1}{n} < f(t_n) \leq f(t_n^*) e^{\frac{c}{n}} \leq \frac{1}{n^2} e^{\frac{c}{n}}. \]
As a result, there is a contradiction when $n$ is sufficiently large. We have thus proved that $f(t) \to 0$ as $t \to \infty$, as claimed.
\end{proof}

\section{Existence of Orthogonal Geodesic Chords}
\label{S:applications}

In this section, we give several geometric applications of the chord shortening flow concerning the existence of multiple orthogonal geodesic chords. We first give the precise definition.

\begin{definition}
\label{D:OGC}
Let $\Sigma \subset \R^n$ be a smooth $k$-dimensional submanifold without boundary. An \emph{orthogonal geodesic chord for $\Sigma$} is a geodesic $c:[0,1] \to \R^n$ with endpoint $c(0)$ and $c(1)$ lying on $\Sigma$ such that $c'(0)$ and $c'(1)$ are normal to $\Sigma$ at $c(0)$ and $c(1)$ respectively.
\end{definition}

An \emph{orthogonal geodesic chord} is also called a \emph{free boundary geodesic} \cite{Zhou16} or a \emph{double normal} \cite{Kuiper64} in the literature. Note that in case $\Sigma \subset \R^n$ is an embedded hypersurface which bounds a domain $\Omega$ in $\R^n$, our definition of orthogonal geodesic chords does not require the chord to be contained inside $\overline{\Omega}$ as for example in \cite{Giambo-Giannoni-Piccione14}. The problem of the existence of multiple orthogonal geodesic chord for submanifolds in $\R^n$ was first treated by Riede \cite{Riede68} as follows. Let $\mathcal{C}_\Sigma$ be the space of all piecewise smooth curve $c:[0,1] \to \R^n$ with end points on $\Sigma$, endowed with the compact open topology. There exists a $\Z_2$-action on $\mathcal{C}_\Sigma$ by $c(t) \mapsto c(1-t)$ whose fixed point set is denoted by $\Delta'$. Denote by $H^{\Z_2}_*(\mathcal{C}_\Sigma,\Delta')$ and $H^*_{\Z_2}(\mathcal{C}_\Sigma)$ the $\Z_2$-equivariant homology groups (relative to $\Delta'$) and cohomology groups respectively. The following result is taken from \cite[Satz (5.5)]{Riede68}.

\begin{lemma}
\label{L:Riede}
If there exists $\beta \in H^{\Z_2}_*(\mathcal{C}_\Sigma,\Delta')$ and $\alpha_1,\cdots,\alpha_s \in H^*_{\Z_2}(\mathcal{C}_\Sigma)$ (not necessarily distinct) with deg $\alpha_i>0$ for all $i$ such that $(\alpha_1 \cup \cdots \cup \alpha_s) \cap \beta \neq 0$, then there exists at least $s+1$ orthogonal geodesic chords for $\Sigma$.
\end{lemma}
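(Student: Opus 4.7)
My plan is to apply the standard Lusternik–Schnirelmann minimax machinery to the length functional $L:\mathcal{C}_\Sigma\to[0,\infty)$, using the chord shortening flow (combined with a discrete Birkhoff reduction) as the equivariant deformation that drives the argument. Observe first that $L$ is continuous with respect to the compact–open topology, is $\Z_2$-invariant under $c\mapsto c(1-t)$, and takes the value $0$ exactly on the fixed point set $\Delta'$ of constant curves on $\Sigma$. Its critical points in $\mathcal{C}_\Sigma\setminus\Delta'$ are precisely the orthogonal geodesic chords for $\Sigma$.

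The first step is to build a $\Z_2$-equivariant deformation $\Phi:[0,\infty)\times\mathcal{C}_\Sigma\to\mathcal{C}_\Sigma$ with $\Phi_0=\mathrm{id}$, continuous in both variables, such that $L(\Phi_t(c))$ is non-increasing in $t$, fixed points of the whole family $\{\Phi_t\}_{t\ge 0}$ are either constant or orthogonal geodesic chords, and $c$ is homotopic to $\Phi_t(c)$ in $\mathcal{C}_\Sigma$ (all rel.\ $\Sigma$). The natural construction is two-stage: apply one step of the Birkhoff-type discrete curve shortening process $\Psi$ described in the introduction (properties (1)–(3)) to pass from an arbitrary piecewise smooth curve to a chord, and then evolve that chord by the chord shortening flow of Definition \ref{D:CSF}, whose short-time existence, length monotonicity \eqref{E:ell-evolution}, and convergence to OGCs (Theorem \ref{T:convergence}) make it exactly the negative gradient flow required. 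Equivariance under orientation reversal is automatic for both stages since the length and the endpoint dynamics \eqref{E:CSF'} are symmetric in $p,q$.

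Next I set up the minimax values. For $i=0,1,\dots,s$ let
\begin{equation*}
\gamma_i:=(\alpha_{i+1}\cup\cdots\cup\alpha_s)\cap\beta\ \in\ H^{\Z_2}_{*}(\mathcal{C}_\Sigma,\Delta'),
\end{equation*}
with the convention $\gamma_s=\beta$, so that $\gamma_0=(\alpha_1\cup\cdots\cup\alpha_s)\cap\beta\ne 0$ by hypothesis. For each $i$ define
\begin{equation*}
c_i:=\inf_{K\in\gamma_i}\ \sup_{c\in K} L(c),
\end{equation*}
where the infimum runs over all (equivariant) singular cycles representing $\gamma_i$ modulo $\Delta'$. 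The standard cap–sup inequality gives $c_0\le c_1\le\cdots\le c_s$, and each $c_i>0$: since $\deg\alpha_j>0$ for all $j$, no representative of $\gamma_i$ can be carried entirely by $\Delta'$, so it must contain a non-constant curve.

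With the deformation $\Phi$ in hand, the remainder follows the classical Lusternik–Schnirelmann template. First, each $c_i$ is a critical value of $L$ on $\mathcal{C}_\Sigma\setminus\Delta'$: if not, the absence of critical points in a neighborhood of level $c_i$ allows $\Phi_t$ (for $t$ large enough, using Theorem \ref{T:convergence} to ensure the flow drops the length below $c_i-\varepsilon$ on any equivariant compact family containing no OGCs near that level) to push a minimax cycle strictly below $c_i$, contradicting the definition of $c_i$. Second, when all $c_i$ are distinct one obtains $s+1$ distinct OGCs directly. When $c_i=c_{i+1}$ for some $i$, the classical cup–length argument (see e.g.\ Klingenberg's book or \cite[Satz (5.5)]{Riede68} itself) uses the existence of a non-trivial $\alpha_{i+1}$ whose cap with $\gamma_{i+1}$ equals $\gamma_i$: the critical set at that common level must carry a class pairing non-trivially with $\alpha_{i+1}$, so it cannot consist of a single point, and in fact supports enough cohomological complexity to yield additional OGCs beyond the values $c_j$ for $j\ne i,i+1$, recovering $s+1$ in total.

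The main obstacle is the first step: producing the equivariant deformation $\Phi$ on the full space $\mathcal{C}_\Sigma$ of piecewise smooth curves, since Definition \ref{D:CSF} only applies to chords. The right way to handle this is precisely the hybrid described above — Birkhoff's $\Psi$ to land in the space of chords followed by the chord shortening flow — and checking continuity at the seam (in particular continuity of the flow with respect to the initial chord, which follows from the Lipschitz dependence in \eqref{E:CSF'} and the long-time estimates in Section 4) is where the real work lies. Once $\Phi$ is in place, the minimax and cup-length arguments are formal.
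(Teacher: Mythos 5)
The paper does not prove this lemma; it is taken verbatim from Riede \cite[Satz (5.5)]{Riede68}, and the subsequent discussion in Section~5 only explains how the chord shortening flow can replace Riede's discrete process $\Psi$ after first reducing to the subspace $\mathcal{C}^0_\Sigma$ of chords. Your proposal is therefore an attempt to reconstruct a full proof that the paper itself omits, and the Lusternik--Schnirelmann template you lay out (the nested cap products $\gamma_i=(\alpha_{i+1}\cup\cdots\cup\alpha_s)\cap\beta$, the minimax values $c_i$, their monotonicity, and the cup-length treatment of repeated values) is the right framework.

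However, your construction of the equivariant deformation $\Phi$ contains a concrete error that would break the argument. You assert that one application of Birkhoff's process $\Psi$ takes an arbitrary piecewise smooth curve ``to a chord.'' It does not: as described in the introduction, $\Psi$ outputs a \emph{piecewise} geodesic -- a broken geodesic with corners at the subdivision points -- not a single chord, so the output does not lie in $\mathcal{C}^0_\Sigma$ and the chord shortening flow of Definition~\ref{D:CSF} cannot be applied to it. There is no ``seam'' whose continuity you can check because the two stages of your hybrid do not meet. The correct and much simpler step, and the one the paper actually takes, is to observe that $\mathcal{C}_\Sigma$ admits a $\Z_2$-equivariant deformation retraction onto $\mathcal{C}^0_\Sigma$ via the linear homotopy $c_s(u)=(1-s)\,c(u)+s\big[(1-u)\,c(0)+u\,c(1)\big]$, which fixes endpoints, is equivariant under orientation reversal, and replaces each curve by the unique chord joining its endpoints; this is what yields the isomorphisms \eqref{E:isomorphism}. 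After that reduction the entire minimax argument lives on $\mathcal{C}^0_\Sigma\cong(\Sigma\times\Sigma)/\Z_2$ with the chord shortening flow supplying the length-decreasing deformation, and your remaining steps go through -- though the strict positivity of each $c_i$ requires, beyond $\gamma_i\neq 0$, the observation that any cycle supported in a sufficiently small neighborhood of $\Delta'$ retracts into $\Delta'$ and hence represents the zero relative class, and the degenerate case $c_i=c_{i+1}$ should be stated as: the critical set at that common level carries non-trivial equivariant cohomology in positive degree, hence is infinite.
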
  

The proof of Lemma \ref{L:Riede} involves a discrete curve shortening process $\Psi$ on $\mathcal{C}_\Sigma$ which satisfies properties (1) - (3) as described in the introduction. Since any curve $c \in \mathcal{C}_\Sigma$ can be continuously deformed into the unique chord joining the same end points, we can restrict $\mathcal{C}_\Sigma$ to the subset $\mathcal{C}^0_\Sigma$ consisting of all the chords with end points on $\Sigma$. The chord shortening flow is then a curve shortening process on $\mathcal{C}^0_\Sigma$ which satisfies all the required properties. Moreover, the space of chords $\mathcal{C}^0_\Sigma$ can also be described as the orbit space of $\Sigma \times \Sigma$ under the $\Z_2$-action $(p,q) \mapsto (q,p)$. As before, if we let $\Delta \subset \Sigma \times \Sigma$ be the fixed point set of the $\Z_2$-action, and $H^{\Z_2}_*(\Sigma \times \Sigma, \Delta)$, $H^*_{\Z_2}(\Sigma \times \Sigma)$ be the $\Z_2$-equivariant homology and cohomology respectively, we have by naturality
\begin{equation}
\label{E:isomorphism}
H^{\Z_2}_*(\Sigma \times \Sigma,\Delta) \cong H^{\Z_2}_*(\mathcal{C}_\Sigma,\Delta') \qquad \text{and} \qquad H^*_{\Z_2}(\Sigma \times \Sigma) \cong H^*_{\Z_2}(\mathcal{C}_\Sigma).
\end{equation}
In \cite{Hayashi82}, Hayashi studied the equivariant (co)homology of $\Sigma \times \Sigma$ and obtained the following result in \cite[Theorem 2]{Hayashi82}.

\begin{lemma}
\label{L:Hayashi}
There exists $\beta \in H^{\Z_2}_{2k}(\Sigma \times \Sigma,\Delta)$ and $\alpha \in H^1_{\Z_2}(\Sigma \times \Sigma)$ such that $\alpha^k \cap \beta \neq 0$ in $H^{\Z_2}_k(\Sigma \times \Sigma, \Delta)$, where $\alpha^k=\alpha \cup \cdots \cup \alpha$ is the $k$-th power of cup products of $\alpha$ and $k=\dim \Sigma$.
\end{lemma}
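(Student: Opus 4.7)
The plan is to work with the Borel $\Z_2$-equivariant (co)homology and compute $\alpha^k \cap \beta$ via the equivariant Thom isomorphism along the normal bundle of the diagonal $\Delta \subset \Sigma \times \Sigma$, complemented by the geometric picture of the Gauss map $(p,q) \mapsto (p-q)/|p-q|$. All coefficients are in $\Z_2$ throughout.

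\textbf{Step 1 (Setup of $\alpha$ and $\beta$).} Write $X := \Sigma \times \Sigma$, with $\Z_2$ acting by the swap $(p,q) \mapsto (q,p)$; the fixed set is the diagonal $\Delta \cong \Sigma$ and the action is free on $X \setminus \Delta$. Form the Borel construction $X_{\Z_2} := (X \times E\Z_2)/\Z_2$ together with its canonical fibration $\pi : X_{\Z_2} \to B\Z_2 \cong \R P^\infty$. I take $\alpha := \pi^* g_0 \in H^1_{\Z_2}(X; \Z_2)$, where $g_0$ generates $H^1(B\Z_2; \Z_2)$. Since $X$ is a closed $2k$-manifold with mod-$2$ fundamental class $[X] \in H_{2k}(X; \Z_2)$ fixed by the swap, the edge map of the Serre spectral sequence lifts $[X]$ to an equivariant class $[X]^{\Z_2} \in H^{\Z_2}_{2k}(X; \Z_2)$, and I take $\beta$ to be its image under the natural map $H^{\Z_2}_{2k}(X) \to H^{\Z_2}_{2k}(X, \Delta)$.

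\textbf{Step 2 (Thom computation and Gauss-map picture).} The normal bundle of $\Delta \subset X$ is isomorphic to $T\Sigma$ with $\Z_2$ acting fibrewise by $v \mapsto -v$, so Borelification produces the bundle $\tilde\gamma \otimes T\Sigma$ over $B\Z_2 \times \Delta$, where $\tilde\gamma \to B\Z_2$ is the tautological line bundle. Its mod-$2$ Euler class expands as
\[ e(\tilde\gamma \otimes T\Sigma) \; = \; \alpha^k \;+\; \alpha^{k-1} w_1(T\Sigma) \;+\; \cdots \;+\; w_k(T\Sigma). \]
Excision and the Thom isomorphism give $H^*_{\Z_2}(X, X \setminus \Delta) \cong H^{*-k}_{\Z_2}(\Delta)$, with Thom class $\tau$ restricting on the zero section to this Euler class. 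By naturality of the cap product applied to the triple $(X, X \setminus \Delta, \Delta)$, I identify $\alpha^k \cap \beta$ with the pushforward $\iota_*[\Delta] \in H^{\Z_2}_k(X, \Delta)$ of the fundamental class of $\Delta$, modulo corrections coming from the lower-order terms $\alpha^{k-i} w_i(T\Sigma)$, $i \geq 1$. As a cross-check, the $\Z_2$-equivariant map $\psi(p,q) := (p-q)/|p-q|$ from $X \setminus \Delta$ to $S^{n-1}$ (antipodal on the target) descends to $\bar\psi : (X \setminus \Delta)/\Z_2 \to \R P^{n-1}$ classifying the double cover, so that $\alpha^k \cap \beta$ admits the explicit geometric representative $\bar\psi^{-1}(L)$ for $L \subset \R P^{n-1}$ a generic codimension-$k$ linear subspace, namely the variety of unordered pairs $\{p,q\}$ with $p-q \in V \setminus \{0\}$ for an $(n-k)$-plane $V \subset \R^n$.

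\textbf{Step 3 (Main obstacle).} The hard part will be confirming that the leading term $\iota_*[\Delta]$ is not cancelled in $H^{\Z_2}_k(X, \Delta)$ by the lower-order Stiefel--Whitney corrections $\alpha^{k-i} w_i(T\Sigma)$ with $i \geq 1$. I would handle this by a bi-degree analysis on the Serre spectral sequence of the fibration $X \hookrightarrow X_{\Z_2} \to B\Z_2$: the leading class $\alpha^k \otimes 1$ sits in bidegree $(k,0)$ on the $E_2$-page and survives to $E_\infty$, whereas each correction term $\alpha^{k-i} w_i$ involves a strictly lower power of $\alpha$ and therefore lies in a different filtration column, making cancellation impossible. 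Alternatively, the Gauss-map picture furnishes a purely geometric verification: a transversality (Sard) argument guarantees that $\bar\psi^{-1}(L)$ is a genuine nonempty $k$-dimensional subvariety of $(X \setminus \Delta)/\Z_2$, and hence $\alpha^k \cap \beta \neq 0$ in $H^{\Z_2}_k(X, \Delta; \Z_2)$, which is the required conclusion.
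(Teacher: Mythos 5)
The paper does not actually prove Lemma~\ref{L:Hayashi}; it quotes it directly as \cite[Theorem 2]{Hayashi82}. So your proposal is, by default, a genuinely different route. The setup in Steps 1--2 (Borel construction, identification of the normal bundle of $\Delta$ with $T\Sigma$ twisted by the sign action, the resulting Euler-class expansion $\alpha^k + \alpha^{k-1}w_1 + \cdots + w_k$, and the Gauss map $\bar\psi$) is all in the right circle of ideas and is a reasonable skeleton. However, the argument has real gaps that the two fallback strategies in Step 3 do not close.

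First, the construction of $\beta$ is shaky: the natural map associated to the fiber inclusion is $H_{2k}(X) \to H^{\Z_2}_{2k}(X)$, and the ``edge map lifts $[X]$'' reasoning does not by itself guarantee this map is nonzero on $[X]$; the cleaner route is to excise an invariant tubular neighbourhood $N$ of $\Delta$, note that $\Z_2$ acts freely off $N$, so $H^{\Z_2}_{2k}(X,\Delta) \cong H_{2k}(W,\partial W)$ for the compact $2k$-manifold-with-boundary $W = (X\setminus \interior N)/\Z_2$, and take $\beta = [W,\partial W]$. Second, and more seriously, once you set things up this way, Lefschetz duality identifies the claim $\alpha^k \cap \beta \neq 0$ with the statement that $\alpha^k$ is \emph{nonzero after restriction to} $W \simeq (X\setminus\Delta)/\Z_2$, not merely nonzero in $H^k_{\Z_2}(X)$. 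The latter is trivial ($\alpha^k = \pi^*g_0^k$, and $\pi^*$ is split injective since any fixed point gives a section of $\pi$), but the restriction $H^k_{\Z_2}(X) \to H^k(W)$ can very well have a kernel, and your filtration argument on the Serre spectral sequence of $X_{\Z_2}\to B\Z_2$ addresses the former, not the latter; it never interacts with the pair $(X,\Delta)$. Indeed, on the boundary $\partial W = P(T\Sigma)$, the projective bundle relation gives $\alpha^k|_{\partial W} = w_1(T\Sigma)\alpha^{k-1} + \cdots + w_k(T\Sigma)$, which vanishes if $\Sigma$ is stably parallelizable (e.g.\ a torus), so one cannot detect the class by restriction to the boundary alone and the lower-order Stiefel--Whitney ``corrections'' genuinely matter. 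Third, the transversality fallback fails for a simple reason: Sard's theorem makes $\bar\psi^{-1}(L)$ a submanifold of the right codimension, but it gives no nonemptiness statement, and even if $\bar\psi^{-1}(L)$ is nonempty, a nonempty submanifold can perfectly well be null-homologous, so ``nonempty $\Rightarrow$ nonzero in $H^{\Z_2}_k(X,\Delta)$'' is not a valid inference. To actually close the argument you would need a further input that exploits the embedding $\Sigma \hookrightarrow \R^n$ more seriously (this is what Hayashi's cited Theorem 2 provides), rather than a purely bundle-theoretic or dimension count; as written, the proof is incomplete.
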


Combining Lemma \ref{L:Hayashi}, \ref{L:Riede} and (\ref{E:isomorphism}), we have proved Theorem \ref{T:double-normals}, which clearly implies Lusternik-Schnirelmann's theorem (Theorem \ref{T:LS}) as a special case since the orthogonal geodesic chords must be contained inside the convex domain by convexity of the domain $\Omega \subset \R^n$.

\section{Shrinking convex chord to a point}
\label{S:convex-chord}

In this section, we study the evolution of chords inside a convex connected planar domain in $\R^2$. In particular, we prove that if an initial chord is \emph{convex}, then it will shrink to a point in finite time under the chord shortening flow. In order to make precise the concept of \emph{convexity}, we need to be consistent with the orientation of a curve in $\R^2$. For this reason, we restrict our attention to plane curves which bounds a domain in $\R^2$.

\begin{definition}[Boundary orientation]
\label{D:boundary-orientation}
For any smooth domain $\Omega \subset \mathbb{R}^2$, we always orient the boundary $\partial \Omega$ as the boundary of $\Omega$ with the standard orientation inherited from $\mathbb{R}^2$. The orientation determines uniquely a global unit tangent vector field, called the \emph{orientation field}, $\xi:\partial \Omega \to T(\partial \Omega)$ such that $\nu:=J \xi$ is the inward pointing normal of $\partial \Om$ relative to $\Om$. Here, $J:\R^2 \to \R^2$ is the counterclockwise rotation by $\pi/2$.
\end{definition}

Using Definition \ref{D:boundary-orientation}, we can define the \emph{boundary angle} $\Theta$ which measures the contact angle between a chord $C$ and the boundary $\partial \Omega$. 

\begin{definition}[Boundary angle]
\label{D:Theta}
For any (oriented) chord $C_{p,q}$ joining $p$ to $q$ with $p \neq q \in \partial \Omega$, we define the \emph{boundary angle} $\Theta:\{p,q\} \to \mathbb{R}$ by
\[ \Theta(p):=\lan \eta(p), \xi(p) \ran, \quad \text{and} \quad \Theta(q):=- \lan \eta(q), \xi(q) \ran,\]
where $\xi$ is the orientation field on $\partial \Omega$ as in Definition \ref{D:boundary-orientation}.
\end{definition}

\begin{definition}
\label{D:convex-chord}
An oriented chord $C_{p,q}$ is \emph{convex} if $\Theta \geq 0$ at both end points.
\end{definition}

\begin{remark}
If we change the orientation of the chord from $C_{p,q}$ to $C_{q,p}$, the boundary angle $\Theta$ changes sign. Since the orientation field $\xi$ is always tangent to $\partial \Omega$, we have $\Theta(p)=\Theta(q)=0$ if and only if $C_{p,q}$ meets $\partial \Omega$ orthogonally at its end points $p$ and $q$.
\end{remark}

If we define the ``unit normal'' $N$ of $\partial C_{p,q}=\{p,q\}$ inside $\partial \Omega$ by setting
\[  N(p)=-\xi(p) \quad \text{and} \quad N(q)=\xi(q),\]
then a solution to the chord shortening flow (\ref{E:CSF'}) can be consider as a smooth $1$-parameter family of pair of points on $\partial \Om$ given by $\gamma:\{0,1\} \times [0,T) \to \partial \Omega$ such that
\begin{equation}
\label{E:CSF''}
\frac{\partial \gamma}{dt} (u,t) =\Theta (\gamma(u,t)) N(\gamma(u,t)),
\end{equation}
where $\Theta$ is the boundary angle for the oriented chord from $\gamma(0,t)$ to $\gamma(1,t)$. Since the value of $\Theta$ at $u=0$ depends also on the other end point $\gamma(1,t)$, this is a non-local function. Therefore, the chord shortening flow can be thought of as a non-local curve shortening flow driven by the boundary angle $\Theta$.

We are now ready to state the main theorem of this section. The readers can compare Theorem \ref{T:contraction} with the famous result of Huisken \cite{Huisken84} which says that any compact embedded convex hypersurface in $\R^n$ would contract to a point in finite time under the mean curvature flow.

\begin{theorem}
\label{T:contraction}
Let $\Omega \subset \R^2$ be a compact connected domain with smooth convex boundary. Any convex chord which is not an orthogonal geodesic chord would converge to a point in finite time under the chord shortening flow.
\end{theorem}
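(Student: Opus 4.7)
The plan is to identify the total turning angle $\Delta\theta$ of $\partial\Omega$ along the short boundary arc between the two endpoints as a monotone quantity under the flow, and to use a strict initial bound $\Delta\theta(0) < \pi$ to rule out the long-time convergence alternative in Theorem~\ref{T:convergence}, thereby forcing finite-time extinction.

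First I would parametrize $\partial\Omega$ by arc length $u \mapsto \gamma(u)$ with tangent angle $\theta(u)$, so that convexity of $\partial\Omega$ becomes $\theta'(u) = \kappa(u) \geq 0$. Writing $p(t) = \gamma(a(t))$ and $q(t) = \gamma(b(t))$ with $a > b$ (so $p$ follows $q$ counterclockwise along $\partial\Omega$) and denoting by $\psi(t)$ the angle of the chord direction $\eta(p) = (p-q)/\ell$, set $\mu := \psi - \theta(a)$ and $\nu := \psi - \theta(b)$. Then $\Theta(p) = \cos\mu$ and $\Theta(q) = \cos\nu$, and because the chord lies inside the convex domain $\Omega$ one has $\sin\nu \geq 0 \geq \sin\mu$; combined with the convex chord hypothesis this forces $\mu \in [-\pi/2, 0]$ and $\nu \in [0, \pi/2]$, so
\[
\Delta\theta := \theta(a) - \theta(b) = \nu - \mu \in (0, \pi],
\]
with $\Delta\theta = \pi$ if and only if the chord is an orthogonal geodesic chord.

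The heart of the proof is the monotonicity of $\Delta\theta$. From Definition~\ref{D:CSF} I would first derive $\dot a = -\cos\mu$ and $\dot b = \cos\nu$, giving
\[
\dot{\Delta\theta} = \kappa(a)\dot a - \kappa(b)\dot b = -\bigl[\kappa(a)\cos\mu + \kappa(b)\cos\nu\bigr] \leq 0,
\]
by convexity of $\partial\Omega$ and of the chord. A parallel computation of $\dot\eta(p)$ analogous to Theorem~\ref{T:eta-T-evolution} yields
\[
\dot\mu = \frac{\sin 2\mu + \sin 2\nu}{2\ell} + \kappa(a)\cos\mu, \qquad \dot\nu = \frac{\sin 2\mu + \sin 2\nu}{2\ell} - \kappa(b)\cos\nu,
\]
from which I would verify positive invariance of the convex chord region: on the boundary $\mu = -\pi/2$ the first equation reduces to $\dot\mu = \sin(2\nu)/(2\ell) \geq 0$ for $\nu \in [0, \pi/2]$, and symmetrically $\dot\nu \leq 0$ on $\nu = \pi/2$.

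With monotonicity and convexity preservation in place the conclusion is immediate: the initial chord being convex but not orthogonal gives $\Delta\theta(0) < \pi$, hence $\Delta\theta(t) \leq \Delta\theta(0) < \pi$ throughout the maximal interval of existence. Were the flow to exist for all $t \in [0, +\infty)$, Theorem~\ref{T:convergence} would force $C(t)$ to converge to an orthogonal geodesic chord, i.e.\ $(\mu(t), \nu(t)) \to (-\pi/2, \pi/2)$ and $\Delta\theta(t) \to \pi$, contradicting the strict upper bound. Hence the maximal time is finite and the chord shrinks to a point. The main technical obstacle will be handling the degenerate corner state $\mu = -\pi/2$, $\nu = 0$ (and its mirror) in the invariance argument: geometrically this is a chord orthogonal to $\partial\Omega$ at one endpoint and tangent to $\partial\Omega$ at the other, which is automatically ruled out when $\partial\Omega$ is strictly convex, but for merely convex $\partial\Omega$ (e.g.\ a stadium) one needs an extra argument to show that a non-orthogonal convex initial chord cannot reach such a degenerate configuration.
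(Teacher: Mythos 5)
Your proof is correct and follows a genuinely different route from the paper's. The paper shows that the scale-invariant quantity $\overline{\Theta}/\ell$ is non-decreasing along the flow (after first establishing, in Lemma~\ref{L:convex}, that $\Theta_{min}/\ell$ is non-decreasing to preserve convexity), and then derives the contradiction from $\overline{\Theta}\to 0$, $\ell\to\ell_\infty>0$ in the $T=+\infty$ alternative. You instead use the boundary turning angle $\Delta\theta=\theta_p+\theta_q$ of the arc cut off by the chord, which is a purely intrinsic quantity not involving $\ell$; its monotonicity $\dot{\Delta\theta}=-\kappa(a)\cos\mu-\kappa(b)\cos\nu\le 0$ is a one-line consequence of $\dot a=-\Theta(p)$, $\dot b=\Theta(q)$ together with $\kappa\ge 0$ and chord-convexity, and is arguably cleaner and more transparent than the algebraic manipulations of $\overline{\Theta}/\ell$ involving the identity $\langle\xi(p),\xi(q)\rangle=\Theta_p\Theta_q-\sqrt{(1-\Theta_p^2)(1-\Theta_q^2)}$. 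Both approaches need a separate convexity-preservation step; your $(\mu,\nu)$-invariance argument, which also parallels the paper's but sidesteps the Lipschitz issue with the one-sided derivative of $\Theta_{min}$, does the job. On your flagged concern about the degenerate corner $(\mu,\nu)=(-\pi/2,0)$: this is in fact not reachable even for merely convex boundaries. If $\nu=0$, the chord lies on the supporting tangent line of $\Omega$ at $q$, so by convexity the whole chord lies in $\partial\Omega$; since the orientation field $\xi$ is constant along the flat segment containing the chord, $\xi(p)=\xi(q)=\eta(p)$, forcing $\mu=0$ as well. Hence $\nu=0$ implies $\mu=0$, and the corner state never occurs; no extra hypothesis of strict convexity is needed.
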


To prove Theorem \ref{T:contraction} we need to establish a few propositions, which are of geometric interest. We first state the evolution of the boundary angle $\Theta$ under the chord shortening flow. Note that we always have $|\Theta| \leq 1$ by definition.

\begin{proposition} [Evolution of boundary angle]
\label{P:Theta-evolution}
Suppose $C(u,t):I \times [0,T) \to \R^2$ is a solution to the chord shortening flow as in Definition \ref{D:CSF}. Then, the boundary angle $\Theta(u,t): \{0,1\} \times [0,T) \to \R$ satisfies the following equation (recall (\ref{E:norm}) and (\ref{E:DN}))
\begin{equation}
\label{E:Theta-evolution}
\frac{\partial }{\partial t} \Theta = -\Delta^{\frac{1}{2}} \Theta+  \frac{1}{\ell} \Big(\|\Theta\|^2_{L^2}+ \ell k \langle -\eta,\nu \rangle  \Big) \Theta + \frac{1}{\ell} \Big( 1+ \langle \xi(p),\xi(q)\rangle \Big) (\Theta-\overline{\Theta}),
\end{equation}
where $k:=\langle \nabla_\xi \xi,\nu \rangle$ is the curvature of $\partial \Omega$ with respect to $\nu$ (recall Definition \ref{D:boundary-orientation}), $\ell=\ell(t)$ is the length of the chord $C(\cdot,t):I \to \R^2$ with outward unit conormal $\eta$. 
\end{proposition}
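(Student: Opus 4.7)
The plan is to reduce everything to Theorem~\ref{T:eta-T-evolution} and project onto the single tangent direction $\xi$. Since $\Sigma = \partial \Om$ is a curve in $\R^2$, the tangent bundle $T\Sigma$ is one-dimensional with frame $\{\xi\}$, and $\langle \eta^N, \xi\rangle = 0$ gives
\[ \eta^T(0) = \Theta(0)\,\xi(p), \qquad \eta^T(1) = -\Theta(1)\,\xi(q), \]
where the sign at $u=1$ comes from the convention $\Theta(q) = -\langle \eta(q),\xi(q)\rangle$ in Definition~\ref{D:Theta}. In particular $\|\eta^T\|_{L^2}^2 = \|\Theta\|_{L^2}^2$, which already identifies one of the coefficients in (\ref{E:Theta-evolution}).

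Next I would differentiate $\Theta(p) = \langle \eta(p),\xi(p)\rangle = \langle \eta^T(p),\xi(p)\rangle$ in time. Along the flow $\partial_t p = -\eta^T(p) = -\Theta(p)\xi(p)$, hence
\[ \partial_t \xi(p) = D_{-\Theta(p)\xi}\xi = -\Theta(p)\, k(p)\,\nu(p), \]
which is \emph{normal} to $\Sigma$ at $p$. Therefore $\langle \eta^T(p),\partial_t \xi(p)\rangle = 0$ and $\partial_t \Theta(p) = \langle \partial_t \eta^T(p),\xi(p)\rangle$; the analogous identity at $q$ picks up an extra minus sign from Definition~\ref{D:Theta}.

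With this in hand I would apply Theorem~\ref{T:eta-T-evolution} at $u=0$ with $k=1$ and $e_1=\xi$, and pair each of its five terms with $\xi(p)$. The two contributions $(\overline{\eta^T}-\eta^T)^N$ and $A(\eta^T,\eta^T)$ are normal to $\Sigma$ at $p$, so they drop out on projection. Using $A(\xi,\xi) = k\nu$, the term $-\sum_i \langle A(\eta^T,e_i),\eta^N\rangle e_i$ yields $\Theta(p)\,k(p)\,\langle -\eta(p),\nu(p)\rangle$, matching the $\frac{1}{\ell}\ell k \langle -\eta,\nu\rangle\Theta$ piece in (\ref{E:Theta-evolution}); the scalar term contributes $\|\Theta\|^2_{L^2}\Theta(p)/\ell$. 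Projecting the nonlocal half-Laplacian piece requires decomposing $\xi(q)$ in the frame $\{\xi(p),\nu(p)\}$, and produces
\[ \langle -\DN \eta^T(0),\xi(p)\rangle = -\frac{1}{\ell}\bigl(\Theta(p) + \Theta(q)\,\langle \xi(q),\xi(p)\rangle \bigr). \]

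The last step is bookkeeping: recast $-\Theta(p)/\ell - \Theta(q)\langle \xi(p),\xi(q)\rangle/\ell$ as $-\DN\Theta(0) + \frac{1}{\ell}\bigl(1+\langle \xi(p),\xi(q)\rangle\bigr)(\Theta - \overline{\Theta})|_{u=0}$, which is a direct algebraic check using $-\DN\Theta(0) = (\Theta(1)-\Theta(0))/\ell$ and $(\Theta-\overline{\Theta})|_{u=0} = -\Theta(1)$. The parallel computation at $u=1$, with the sign flip from Definition~\ref{D:Theta} absorbed into $\partial_t \Theta(q) = -\langle \partial_t \eta^T(q),\xi(q)\rangle$, yields the matching expression at that endpoint. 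I expect the only real obstacle to be this bookkeeping: keeping the signs straight across the two endpoints, tracking the $\langle \xi(p),\xi(q)\rangle$ factor that appears because $\xi(p)$ and $\xi(q)$ lie in distinct lines, and repackaging the resulting linear combination of $\Theta(0),\Theta(1)$ into the particular nonlocal form $-\DN\Theta + \frac{1}{\ell}(1+\langle \xi(p),\xi(q)\rangle)(\Theta-\overline{\Theta})$ demanded by the statement. No additional geometric identity beyond Theorem~\ref{T:eta-T-evolution} and the structure equation $D_\xi \xi = k\nu$ is needed.
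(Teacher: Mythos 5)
Your proposal is correct and follows exactly the route the paper intends: the paper's own proof is the one-line remark that the statement ``follows directly from Theorem \ref{T:eta-T-evolution} and Definition \ref{D:Theta},'' and what you have written is a careful unwinding of that reduction. The key points you need all check out: the identification $\eta^T(0) = \Theta(p)\xi(p)$, $\eta^T(1) = -\Theta(q)\xi(q)$; the observation that $\partial_t \xi$ is normal (via $D_\xi\xi = k\nu$) so that $\partial_t\Theta(p) = \langle\partial_t\eta^T,\xi(p)\rangle$ and $\partial_t\Theta(q) = -\langle\partial_t\eta^T,\xi(q)\rangle$; the vanishing of the two normal terms of \eqref{E:eta-T-evolution} under pairing with $\xi$; the identity $A(\xi,\xi) = k\nu$ producing $k\langle-\eta,\nu\rangle\Theta$; and the final algebraic repackaging of $-\Theta(p)/\ell - \langle\xi(p),\xi(q)\rangle\Theta(q)/\ell$ into $-\DN\Theta(0) + \tfrac{1}{\ell}(1+\langle\xi(p),\xi(q)\rangle)(\Theta-\overline{\Theta})|_{u=0}$ using $\Theta(0)-\overline{\Theta} = -\Theta(1)$. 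I verified each endpoint and the bookkeeping is consistent.
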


\begin{proof}
It follow directly from Theorem \ref{T:eta-T-evolution} and Definition \ref{D:Theta}
\end{proof}

Using (\ref{E:Theta-evolution}), we immediately have the following evolution equations.

\begin{corollary}
Under the same hypothesis as Proposition \ref{P:Theta-evolution}, we have:
\begin{eqnarray}
\label{E:Theta-avg-evolution}
\frac{d}{dt} \overline{\Theta} &=& \frac{1}{\ell} \Big( \|\Theta\|^2_{L^2} -1-\langle \xi(p),\xi(q) \rangle \Big) \overline{\Theta} + \overline{k \langle -\eta, \nu \rangle \Theta}, \\
\label{E:Theta-norm-evolution}
\frac{1}{2} \frac{d}{dt} \|\Theta\|^2_{L^2} &=&  \frac{\ell}{2} \lan \xi(p),\xi(q) \ran \|\DN \Theta\|^2_{L^2} + \overline{k \langle -\eta,\nu \rangle \Theta^2} \\
&& \hspace{2cm} + \frac{1}{\ell}  \Big( \|\Theta\|^2_{L^2}-1-\langle \xi(p),\xi(q) \rangle \Big) \|\Theta\|^2_{L^2}. \nonumber
\end{eqnarray}
\end{corollary}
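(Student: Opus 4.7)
The proof of the corollary is a direct computation obtained by summing the pointwise evolution equation (\ref{E:Theta-evolution}) over $u \in \partial I = \{0,1\}$ against the appropriate weight: weight $1$ gives the evolution of $\overline{\Theta}$, and weight $\Theta$ gives the evolution of $\|\Theta\|_{L^2}^2$. The main ingredient is Lemma \ref{L:DN} for handling the $\Delta^{1/2}$ term, plus one small algebraic identity to produce the factor $\langle \xi(p),\xi(q)\rangle$ in front of $\|\Delta^{1/2}\Theta\|_{L^2}^2$.

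First I would prove (\ref{E:Theta-avg-evolution}) by summing (\ref{E:Theta-evolution}) at $u=0$ and $u=1$. The $\Delta^{1/2}\Theta$ term contributes zero by Lemma \ref{L:DN}. The coefficient $\|\Theta\|_{L^2}^2$ is a scalar (independent of $u$), so it factors out of the sum and gives $\frac{1}{\ell}\|\Theta\|_{L^2}^2\,\overline{\Theta}$; similarly $1+\langle\xi(p),\xi(q)\rangle$ is constant in $u$. The curvature term is genuinely endpoint-dependent and stays inside the sum as $\overline{k\langle -\eta,\nu\rangle\Theta}$. Finally I use the identity $\overline{\Theta-\overline{\Theta}} = \overline{\Theta} - 2\,\overline{\Theta} = -\overline{\Theta}$ to convert the last term of (\ref{E:Theta-evolution}) into $-\frac{1}{\ell}(1+\langle \xi(p),\xi(q)\rangle)\overline{\Theta}$, and collecting terms yields (\ref{E:Theta-avg-evolution}).

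For (\ref{E:Theta-norm-evolution}) I would write $\frac{1}{2}\frac{d}{dt}\|\Theta\|_{L^2}^2 = \overline{\Theta\,\partial_t\Theta}$ and again pair $\Theta$ against each term of (\ref{E:Theta-evolution}). The $-\Delta^{1/2}\Theta$ term gives $-\frac{\ell}{2}\|\Delta^{1/2}\Theta\|_{L^2}^2$ by Lemma \ref{L:DN}; the scalar coefficient $\frac{1}{\ell}\|\Theta\|_{L^2}^2$ multiplying $\Theta$ produces $\frac{1}{\ell}\|\Theta\|_{L^2}^4$; the curvature term produces $\overline{k\langle -\eta,\nu\rangle \Theta^2}$. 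The only step requiring care is the last term of (\ref{E:Theta-evolution}). Using $\overline{\Theta(\Theta-\overline{\Theta})} = -2\Theta(0)\Theta(1)$ together with the identity
\begin{equation*}
-2\Theta(0)\Theta(1) \;=\; \tfrac{\ell^2}{2}\|\Delta^{1/2}\Theta\|_{L^2}^2 \;-\; \|\Theta\|_{L^2}^2,
\end{equation*}
which follows directly from the definition (\ref{E:DN}) of $\Delta^{1/2}\Theta$, this contribution becomes
\begin{equation*}
\tfrac{\ell}{2}(1+\langle \xi(p),\xi(q)\rangle)\|\Delta^{1/2}\Theta\|_{L^2}^2 \;-\; \tfrac{1}{\ell}(1+\langle \xi(p),\xi(q)\rangle)\|\Theta\|_{L^2}^2.
\end{equation*}
The crucial cancellation of the $+\frac{\ell}{2}\|\Delta^{1/2}\Theta\|_{L^2}^2$ piece against $-\frac{\ell}{2}\|\Delta^{1/2}\Theta\|_{L^2}^2$ from the first term leaves the correct coefficient $\frac{\ell}{2}\langle \xi(p),\xi(q)\rangle$, and regrouping the remaining pieces yields (\ref{E:Theta-norm-evolution}).

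The only genuine obstacle is spotting the rewriting of $-2\Theta(0)\Theta(1)$ needed to convert the pointwise algebra into an intrinsic expression involving $\|\Delta^{1/2}\Theta\|_{L^2}^2$ and $\|\Theta\|_{L^2}^2$; once that identity is in place the rest is bookkeeping.
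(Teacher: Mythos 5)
Your proof is correct and takes exactly the approach the paper intends: the paper's own proof is the single sentence "Both equations follow from (\ref{E:Theta-evolution}) and Lemma \ref{L:DN}," and you have simply filled in the algebra. The identity $-2\Theta(0)\Theta(1) = \tfrac{\ell^2}{2}\|\Delta^{1/2}\Theta\|_{L^2}^2 - \|\Theta\|_{L^2}^2$ that you isolate is indeed the only nontrivial step needed to produce the $\langle\xi(p),\xi(q)\rangle$ coefficient in (\ref{E:Theta-norm-evolution}), and your bookkeeping is accurate throughout.
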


\begin{proof}
Both equation follows from (\ref{E:Theta-evolution}) and Lemma \ref{L:DN}.
\end{proof}


Our first lemma is that convexity is preserved under the chord shortening flow. From now on, we will use $C(t)$ to denote the unique solution to the chord shortening flow with initial chord $C(0)$ defined on the maximal time interval $t \in [0,T)$ (where $T$ could be infinite). 

\begin{lemma}
\label{L:convex}
Let $C(0)$ be a convex chord inside a compact domain $\Omega \subset \R^2$ with convex boundary $\partial \Omega$. Then, $C(t)$ remains convex for all $t \in [0,T)$.
\end{lemma}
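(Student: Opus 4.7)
My plan is to prove the lemma by a maximum-principle argument applied to the two endpoint values $\Theta_0(t) := \Theta(0,t)$ and $\Theta_1(t) := \Theta(1,t)$ of the boundary angle, using their evolution equation (\ref{E:Theta-evolution}). The chord $C(t)$ is convex precisely when $(\Theta_0(t),\Theta_1(t))$ lies in the closed positive quadrant of $\R^2$, so the goal is to show that this quadrant is flow-invariant. Since $(\Theta_0, \Theta_1)$ satisfies a smooth ODE system by Proposition \ref{T:short-time-existence}, it suffices by the standard tangential invariance criterion to check that the velocity vector points into (or along) the quadrant at every boundary point, i.e.\ whenever one of the $\Theta_i$'s equals zero.

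To set this up, I will specialize (\ref{E:Theta-evolution}) to $u = 0$ at a configuration with $\Theta_0 = 0$. Using $\DN \Theta(0) = (\Theta_0 - \Theta_1)/\ell = -\Theta_1/\ell$ and $(\Theta - \overline{\Theta})|_{u=0} = -\Theta_1$, and noting that the middle bracket on the right-hand side of (\ref{E:Theta-evolution}) is multiplied by $\Theta_0 = 0$ and hence vanishes, the evolution reduces to the clean expression
\[
\left. \frac{d\Theta_0}{dt} \right|_{\Theta_0 = 0} = -\frac{\langle \xi(p), \xi(q) \rangle}{\ell}\,\Theta_1,
\]
and symmetrically at $u = 1$. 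The invariance condition therefore reduces to showing $\langle \xi(p), \xi(q) \rangle \leq 0$ whenever the chord is perpendicular to $\partial \Omega$ at one of its endpoints.

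This is the key geometric step, and is where the convexity of $\partial\Omega$ is used. Assuming the chord is perpendicular at $p$, I choose coordinates with $p$ at the origin, $\xi(p) = e_1$ and $\nu(p) = e_2$, so that the chord lies along the segment from $(0,0)$ to $q = (0,\ell)$. Since $\overline{\Omega}$ is convex, the tangent line to $\partial \Omega$ at $q$ supports $\overline{\Omega}$, and $p = (0,0) \in \overline{\Omega}$ forces $\langle p - q, \nu(q) \rangle \geq 0$, giving $\nu_2(q) \leq 0$ where $\nu(q) = (\nu_1(q), \nu_2(q))$. Since $\xi(q) = -J\nu(q) = (\nu_2(q), -\nu_1(q))$, I obtain $\langle \xi(p), \xi(q) \rangle = \nu_2(q) \leq 0$, as desired.

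Combining the two ingredients, if $t_0 \in (0, T)$ is the first time one of the $\Theta_i$'s vanishes, say $\Theta_0(t_0) = 0$ and $\Theta_1(t_0) \geq 0$, then $\Theta_0'(t_0) \geq 0$ while smoothness and $\Theta_0 > 0$ on $[0, t_0)$ force $\Theta_0'(t_0) \leq 0$; hence $\Theta_0'(t_0) = 0$. The main obstacle is excluding this degenerate equality case. If $\Theta_1(t_0) = 0$ then the chord is already a doubly orthogonal fixed point of the flow and convexity is trivially preserved. The remaining case $\langle \xi(p), \xi(q) \rangle = 0$, corresponding to $\partial\Omega$ having a tangent at $q$ parallel to the chord, I expect to handle either by a standard perturbation of $\Theta_i$ by $\epsilon\, e^{Kt}$ with $K$ large or by a second-order analysis of the evolution at $t_0$.
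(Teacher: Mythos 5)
Your proposal takes a genuinely different route from the paper. Where you argue quadrant-invariance by analyzing each endpoint angle $\Theta_0,\Theta_1$ separately at the wall $\{\Theta_i=0\}$ and establish the key sign $\langle\xi(p),\xi(q)\rangle\le 0$ via a supporting-hyperplane argument, the paper instead works with $\Theta_{\min},\Theta_{\max}$, invokes the explicit identity $\langle\xi(p),\xi(q)\rangle=\Theta_p\Theta_q-\sqrt{(1-\Theta_p^2)(1-\Theta_q^2)}$ (its equation (\ref{E:xi}), a Gauss--Bonnet/turning-angle consequence of convexity), and combines the $\Theta$-evolution with the $\ell$-evolution to prove that $\Theta_{\min}/\ell$ is non-decreasing. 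That scale-invariant quotient is the more natural quantity here and is reused almost verbatim in the proof of Theorem \ref{T:contraction}, so the paper's route buys more downstream. Your supporting-hyperplane argument is correct and arguably more elementary than the Gauss--Bonnet identity, and your reduction of $\frac{d\Theta_0}{dt}\big|_{\Theta_0=0}=-\tfrac{1}{\ell}\langle\xi(p),\xi(q)\rangle\Theta_1$ from (\ref{E:Theta-evolution}) is also correct.

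However, the degenerate equality case is a genuine gap as you have left it, and the two fixes you sketch are not immediately available. The perturbation $\Theta_i\mapsto\Theta_i+\epsilon e^{Kt}$ requires absorbing error terms of size $\sim\epsilon e^{Kt}/\ell$ coming from the $\Theta_0$-proportional terms in (\ref{E:Theta-evolution}) by $\epsilon K e^{Kt}$, which would need a uniform positive lower bound on $\ell(t)$ that you do not have (and which may fail, since the flow is expected to shrink the chord). The second-order analysis is plausible but not carried out. What does close the gap cleanly is to observe that the degenerate configuration cannot arise geometrically. With $\Theta_0=0$ and $\langle\xi(p),\xi(q)\rangle=0$, your own coordinates show that $\xi(q)$ is parallel to the chord, so the supporting tangent line to $\partial\Omega$ at $q$ contains the whole chord; convexity then forces the segment $pq$ to lie on $\partial\Omega$, so the boundary tangent at $p$ is along the chord direction, giving $|\Theta_0|=1$ rather than $\Theta_0=0$, a contradiction. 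Equivalently, plugging $\Theta_0=0$ into the paper's identity (\ref{E:xi}) gives $\langle\xi(p),\xi(q)\rangle=-\sqrt{1-\Theta_1^2}$, which is strictly negative unless $\Theta_1\in\{0,1\}$; the case $\Theta_1=0$ is the orthogonal-chord fixed point you already dispose of, and $\Theta_1=1$ is ruled out as above. With that observation your first-exit-time argument is airtight, and your proof is complete.
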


\begin{proof}
Let $\Theta_{min}$ and $\Theta_{max}$ be the minimum and maximum of $\Theta$,  both of which is a Lipschitz function of $t$. By (\ref{E:Theta-evolution}), we have the following equality
\begin{equation}
\label{E:Theta-min-1}
\frac{d}{dt} \Theta_{min} = \frac{1}{\ell} \Big(  (\|\Theta\|^2_{L^2}-1) \Theta_{min} + \ell k \langle -\eta,\nu \rangle \Theta_{min} -  \langle \xi(p),\xi(q)\rangle \Theta_{max}  \Big)
\end{equation}
As $\partial \Omega$ is convex, we have $k \geq 0$ and $\langle -\eta,\nu \rangle \geq 0$. Moreover, if the chord is convex, then $\Theta_{min} \geq 0$. Therefore, (\ref{E:Theta-min-1}) implies the following differential inequality
\begin{equation}
\label{E:Theta-min-2}
 \frac{d}{dt} \Theta_{min} \geq \frac{1}{\ell} \Big( (\|\Theta\|^2_{L^2}-1) \Theta_{min}  -  \langle \xi(p),\xi(q)\rangle \Theta_{max}  \Big).
\end{equation}
By elementary geometry (see Figure \ref{F:xi}), we can express the term involving the orientation field as
\begin{equation}
\label{E:xi}
\langle \xi(p),\xi(q) \rangle = \Theta_p \Theta_q - \sqrt{(1-\Theta_p^2)(1-\Theta_q^2)}. 
\end{equation}

Combining (\ref{E:Theta-min-2}) with (\ref{E:ell-evolution}), noting that $\|\eta^T\|^2_{L^2}=\|\Theta\|^2_{L^2}$ and using (\ref{E:xi}),
\begin{eqnarray*}
 \frac{d}{dt} \left( \frac{\Theta_{min}}{\ell} \right) &\geq& \frac{1}{\ell^2} \Big( (2\|\Theta\|^2_{L^2}-1) \Theta_{min}  -  \langle \xi(p),\xi(q)\rangle \Theta_{max} \Big) \\
 &=& \frac{1}{\ell^2} \Big( 2\Theta^3_{min} -(1-\Theta_{max}^2)\Theta_{min} + \sqrt{(1-\Theta_{min}^2)(1-\Theta_{max}^2)} \Theta_{max}   \Big) \\
 & \geq &  \frac{1}{\ell^2} \Big( 2 \Theta_{min}^3 + (1-\Theta_{max}^2) (\Theta_{max} -\Theta_{min}) \Big) \geq 0.
\end{eqnarray*}
Therefore, if $\Theta_{min} \geq 0$ at $t=0$, then $\Theta_{min} /\ell$ is a non-decreasing function of $t$, hence is non-negative for all $t \in [0,T)$. This proves that $C(t)$ remains convex for all $t \in [0,T)$.
\end{proof}

\begin{figure}[h]
\centering
\includegraphics[height=5cm]{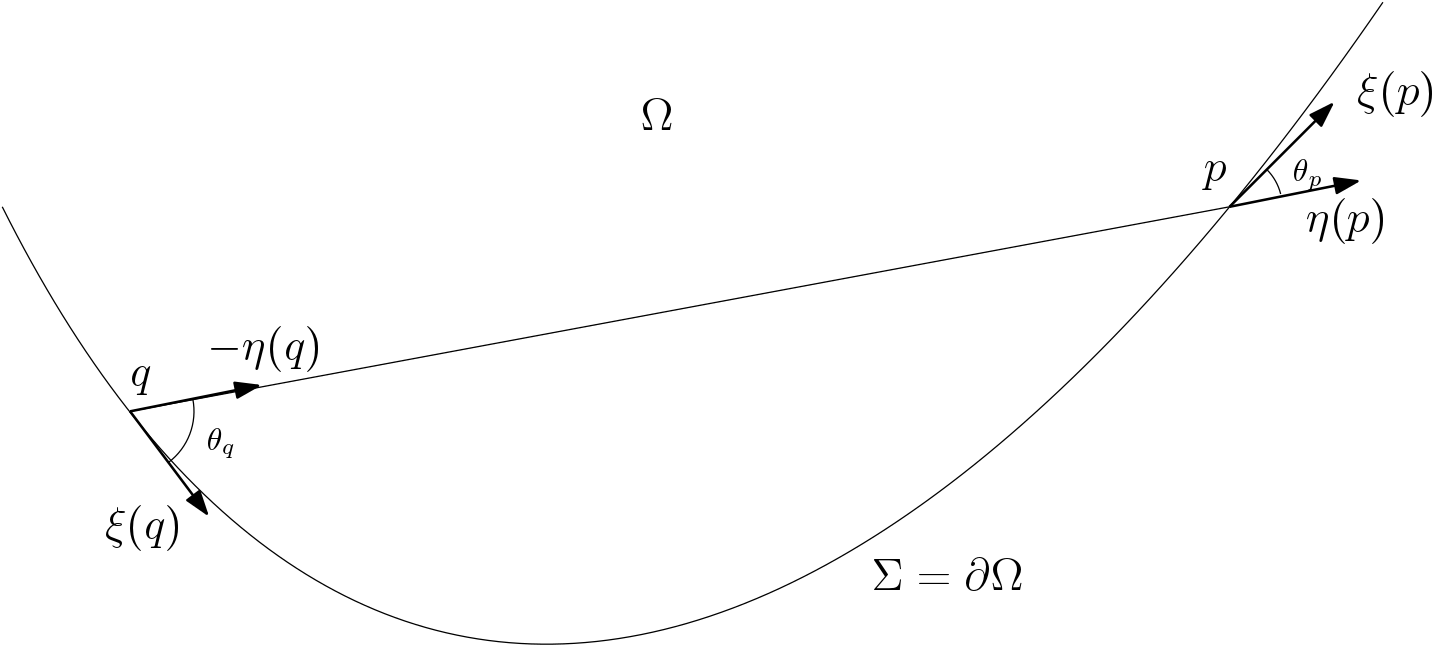}
\caption{The convex region cut out by a convex chord in $\Omega$. Note that $\langle \xi(p) ,\xi(q) \rangle =\cos (\theta_p+\theta_q)$.}
\label{F:xi}
\end{figure}

We are now ready to prove the main result of this section.

\begin{proof}[Proof of Theorem \ref{T:contraction}]
By Theorem \ref{T:convergence}, it suffices to show that the chord shortening flow $C(t)$ exists only on a maximal time interval $t \in [0,T)$ with $T<+\infty$. First of all, $\Theta \geq 0$ for all $t \in [0,T)$ by Lemma \ref{L:convex}. Using (\ref{E:Theta-avg-evolution}) and (\ref{E:ell-evolution}), notice that $2\|\Theta\|^2_{L^2} \geq \overline{\Theta}^2$, a similar argument as in the proof of Lemma \ref{L:convex} gives\begin{eqnarray*}
\frac{d}{dt} \left( \frac{\overline{\Theta}}{\ell} \right) &\geq & \frac{1}{\ell^2} \Big(\overline{\Theta}^2-1-\langle \xi, \xi \rangle \Big) \overline{\Theta} \\
& \geq & \frac{1}{\ell^2} \Big( \Theta_{min}^2 + \Theta_{min} \Theta_{max} \Big) \overline{\Theta} \geq 0.
\end{eqnarray*}
Therefore, $\overline{\Theta}/\ell$ is a non-decreasing function of $t$. Since $\overline{\Theta}/\ell >0$ at $t=0$, it remains bounded away from zero for all $t \in [0,T)$. Therefore, if $T=+\infty$, by Theorem \ref{T:convergence} we must have $C(t)$ converges to an orthogonal geodesic chord and thus $\overline{\Theta}/\ell \to 0$, which is a contradiction.
\end{proof}


\bibliographystyle{amsplain}
\bibliography{references}

\end{document}